\newsavebox{\abstractbox} %redefine abstract layout
\renewenvironment{abstract}
  {\begin{lrbox}{0}\begin{minipage}{\textwidth}
   \begin{center}\normalfont\sectfont\abstractname\end{center}\quotation}
  {\endquotation\end{minipage}\end{lrbox}%
   \global\setbox\abstractbox=\box0 }
\theoremstyle{plain}% default
\newtheorem*{assp}{Assumptions on the local time}
\newtheorem{rmk}{Remark}[section]
	\crefname{rmk}{Remark}{Remarks}
\newaliascnt{theorem}{rmk}
\newtheorem{theorem}[theorem]{Theorem}
\crefname{theorem}{Theorem}{Theorems}
\newaliascnt{lem}{rmk}
	\newtheorem{lem}[lem]{Lemma}
	\crefname{lem}{Lemma}{Lemmas}
\newaliascnt{cor}{rmk}
	\crefname{cor}{Corollary}{Corollaries}
\newaliascnt{dfn}{rmk}
	\crefname{dfn}{Definition}{Definitions}
\newaliascnt{prop}{rmk}
	\newtheorem{prop}[prop]{Proposition}
	\crefname{prop}{Proposition}{Propositions}
\newcommand{\EE}{\mathds{E}}
\newcommand{\PP}{\mathds{P}}
\newcommand{\e}{\textrm{e}}
\newcommand{\di}{\textrm{d}}
\begin{document}

%%----------------------------------------------------------------------------------------------------
%% titlepage

\title{Universality for persistence exponents of local times of self-similar processes with stationary increments}
%\titlehead{Version of  \today}
%\subject{Probability}
\author{Christian M\"onch}

\begin{abstract}	
We show that $\PP ( \ell_X(0,T] \leq 1)=(c_X+o(1))T^{-(1-H)}$, where $\ell_X$ is the local time measure at $0$ of any recurrent $H$-self-similar real-valued process $X$ with stationary increments that admits a sufficiently regular local time and $c_X$ is some constant depending only on $X$. A special case is the Gaussian setting, i.e. when the underlying process is fractional Brownian motion, in which our result settles a conjecture by Molchan [Commun. Math. Phys. 205, 97-111 (1999)] who obtained the upper bound $1-H$ on the decay exponent of $\PP ( \ell_X(0,T] \leq 1)$. % Moreover, under some additional assumptions on the underlying process, we extend our results to persistence exponents of the maximum process, the length of the first macroscopic excursion from $0$ and the time spent in the positive half-line. 
Our approach establishes a new connection between persistence probabilities and Palm theory for self-similar random measures, thereby providing a general framework which extends far beyond the Gaussian case.

{\footnotesize
\vspace{0.1cm}
\noindent\emph{MSc Classification:}  Primary 60G22 Secondary 60G15, 60G18.\\
\noindent\emph{Keywords: fractional Brownian motion, local time, marked point process, Palm distribution, persistence probability, self-similarity, stationary increments}}
\end{abstract}

\maketitle

%%----------------------------------------------------------------------------------------------------
%% content
\section{Introduction: Persistence probabilities for fractional Brownian motion}
% GENERAL PERSISTENCE BACKGROUND
We study local times of stochastic processes from the point of view of {persistence probabilities}, i.e. the probabilities that a stochastic process remains inside a relatively small subset of its state space for a long time. The problem of calculating persistence probabilities is a very active field of mathematical research, see e.g. the recent articles \cite{aurzadabuck2018persistence,aurzada2018persistence,lyu2018persistence,molchan2018persistence,poplavskyi2018exact,amzMarkov}, an overview of the developments over the last decades is given by Aurzada and Simon in \cite{aurzada2015persistence}. The main motivation to study the persistence behaviour of stochastic systems is its great significance to certain areas of statistical physics, see e.g. \cite{caravenna2008pinning,constantin2004persistence,majumdar1999persistence} and the survey by Bray et al. \cite{bray2013persistence}.\\

% MOLCHAN'S WORK ON FBM
The starting point of the present investigation are Molchan's celebrated and by now classical results \cite{molchan1999maximum} concerning the persistence of linear fractional Brownian motion, which we briefly summarise now. Let $B=(B_t)_{t\in\mathds{R}}$ denote a $1$-dimensional fractional Brownian motion (FBM) of Hurst index $H\in(0,1)$. $B$ can be characterised as the unique (up to multiplication by a constant) Gaussian process which is \emph{$H$-self-similar} with \emph{stationary increments ($H$-sssi)}. In \cite{molchan1999maximum} it is shown that the maximum process $\bar{B}_t=\max_{0\leq s \leq t}B_s, t\geq 0,$ of $B$ satisfies
\begin{equation}\label{eq:maxfbm}
\PP(\bar{B}_T \leq 1)= T^{-(1-H)+o(1)}.
\end{equation}
Subsequently, improved bounds on the error estimate implicit in \eqref{eq:maxfbm} have been derived by Aurzada \cite{aurzada2011one} and by Aurzada et al. \cite{aurzada2018persistence}. Note that, using self-similarity, we may replace the boundary $1$ in \eqref{eq:maxfbm} by any fixed value $x>0$ without changing the order of decay. The probability in \eqref{eq:maxfbm} is called the \emph{persistence probability} of $B$ and the corresponding exponent $\bar{\kappa}=1-H$ the \emph{persistence exponent} of $B$. In \cite{molchan1999maximum}, Molchan also showed that the lower tail probabilities of several other path functionals of FBM are governed by the persistence exponent. In particular, he studied
\begin{itemize}
\item $\ell(0,T]:=\lim_{\epsilon\to 0}(2\epsilon)^{-1}\int_0^T \mathds{1}\{B_t\in (-\epsilon,\epsilon)\}\di t$, the local time at $0$,
\item $\tau^{+}:= \inf\{t\geq 1: B_t=0\}$, the first zero after time $1$,
\item $\sigma^{+}_T:= \int_0^T \mathds{1}\{B_t>0\}\di t$, the time spent in the positive half-axis,
\item $\tau^{\text{max}}_T:= \arg \max \{B_t, t\in[0,T]\}$, the time at which the maximum is achieved;
\end{itemize}
and his results imply that \begin{equation}\label{eq:otherexp}
\lim_{T\to\infty}\frac{\PP(\tau^{+}\geq T)}{\log T}=\lim_{T\to\infty}\frac{\PP(\sigma^{+}_T\leq 1)}{\log T}
= \lim_{T\to\infty}\frac{\PP(\tau^{\text{max}}_T\leq 1)}{\log T}= H-1.
\end{equation}
These asymptotics can be viewed as a general (and very weak) form of L{\'e}vy's arcsine-laws for Brownian motion. Intuitively, the agreement of exponents can be explained by observing that the dominating events contributing to each of the probabilities in \eqref{eq:maxfbm}
and \eqref{eq:otherexp} are long (negative) excursions of $B$ from $B_0=0$. This type of event also entails a small local time at $0$, and one is inclined to believe that the probability of the local time being small is of the same order. However, the result in \cite{molchan1999maximum} for the local time is only a lower bound, namely that there is a constant $b\in(0,\infty)$ such that \begin{equation}\label{eq:UB}
\PP(\ell(0,T]\leq 1) \geq T^{-(1-H)}\,b \e^{-\sqrt{\log T}},
\end{equation}
for sufficiently large $T$. Hence, the \emph{local time persistence exponent} 
\[
\kappa = -\lim_{T\to\infty}\frac{\PP(\ell(0,T]\leq 1)}{\log T}
\] of $B$ satisfies 
\[
\kappa\leq 1-H,
\]
and this upper bound with the error estimate given in \eqref{eq:UB} is still the best known lower tail estimate for the local time of FBM with index $H\in(0,1)\setminus\{\nicefrac{1}{2}\}$. The Markovian case $H=\nicefrac12$, is of course exceptional -- the exact distribution of $\ell(0,T]$ for $\nicefrac12$-FBM (i.e. Brownian motion) had already been determined by L{\'e}vy \cite{levyprocessus} 50 years prior to Molchan's paper.

Molchan's proofs rely on the connection of the persistence probability to a certain path integral functional and this relation is in fact also useful outside the FBM context, see e.g. \cite{agpp2015persistence}. Based on the bounds for the persistence probability obtained in this manner, he then derives  the tail bounds for the distribution of the other functionals by explicitly relating the events in question. However, determining $\kappa$ this way is harder than determining $\bar{\kappa}$ -- as a functional of the path of $B$, the local time $\ell$ is in general analytically more involved than the other quantities $\bar{B}_T,\sigma^+_T, \tau^{\text{max}}_T$ and $\tau^+$. Thus, relating distributional properties of $\ell$ to the behaviour of $B$ in a path-wise manner is a challenging task. In addition, Molchan's argument requires some technical tools, namely Slepian's Lemma and reproducing kernel Hilbert spaces, which are specific to the Gaussian setting.\\

%GOAL OF THIS PAPER
The goal of this paper is to show how to circumvent these obstacles and establish the equality \begin{equation}
\label{eq:kappakappa}
\kappa = 1-H
\end{equation}
for FBM directly by studying the local time. In fact, we prove a significantly stronger result, namely that there is a constant $C\in(0,\infty)$, such that
\begin{equation}\label{eq:newbounds}
\PP(\ell((0,T]\leq 1)\sim C T^{-(1-H)},%\quad \text{ as  }T\to\infty,
\end{equation}
where here and in what follows we use the notation $f(T)\sim g(T)$ to indicate that the ratio of the two functions $f,g$ converges to $1$ as the argument $T$ approaches $\infty$. Our approach to show \eqref{eq:newbounds} does not use that FBM is a Gaussian process. Consequently, \eqref{eq:newbounds} not only holds for FBM, but for \textbf{any} $H$-sssi process which admits sufficiently regular local time measures.\\
%DISCUSSION OF THE RESULT

A heuristic interpretation of the equality \eqref{eq:kappakappa} is that it relates the time $B$ spends at $0$ to the box-counting dimension\footnote{Heuristic scaling arguments often use the box-counting dimension to capture the fractality of a set due to its rather intuitive definition, whereas the Hausdorff dimension is generally preferrable from a mathematical point of view, see e.g. \cite{falconer2004fractal} for a discussion. Both notions of fractal dimension coincide for many random fractals and in particular for the zero set of FBM. In fact, it was recently shown by Mukeru \cite{Mukeru2018} that the level sets of FBM even have Fourier dimension $1-H$.} of its zero set, which equals $1-H$. Indeed, there is a well known non-rigorous box-counting argument, see e.g. \cite{ding1995distribution}, which suggests that the probability of observing an excursion from $0$ of length greater than $T$ is of order $T^{-{(1-H)}}$. One way of looking at our result is that it makes this connection rigorous; our method indeed enables us to prove \eqref{eq:kappakappa} using only the invariance properties of the underlying processes, without recurrence to specific distributional structures such as Gaussianity, the Markov or Martingale property, etc.

It is immediate from \eqref{eq:kappakappa} that we have $\bar{\kappa}=\kappa$ for $B$ %and we show in \cref{thm:persistence2} that
and the author believes that this is also true in a more general context. In particular, one should be able to combine the arguments in this paper with the methods developed by Aurzada et al. in \cite{aurzada2018persistence,aurzada2016persistence} to show that both persistence exponents coincide for all $H$-sssi processes which are positively associated. %which will be the subject of a forthcoming article.% However, we require some additional technical conditions on the underlying process which we believe are not essential but narrow the scope of our results for $\bar{\kappa}$ considerably.

The technique for establishing $\kappa=1-H$ proposed below is completely novel in the context of persistence probabilities. It combines three principal ingredients: A distributional representation of the local times using Palm theory; a simple bi-variate scaling relation for an associated point process, which is equivalent to the $H$-sssi property; and a well-known invariance property of Palm distributions which is the measure theoretic counterpart to cycle-stationary \cite{thorisson1995time} in the point processes setting. Only the first part requires a few abstract results from the theory of random measures which are not based on simple calculations. More precisely, we exploit the fact that the local time of an $H$-sssi process can be constructed as the Palm distribution associated to certain stationary non-finite distributions of measures on the real line. This approach was originally developed by Z\"ahle \cite{Z1,Z2} who applied it to determine the carrying (Hausdorff) dimension of local times and other random measures derived from $H$-sssi processes \cite{Z3}.\\

%ORGANISATION OF THE ARTICLE
The remainder of this text is organised as follows. In the next section we fix our notation and present our main result for the local time persistence probabilities in a general setting, Theorem~\ref{thm:persistence1} and in two special cases, namely for FBM and the Rosenblatt process. The main arguments to prove \cref{thm:persistence1} are given in Section~\ref{sec:proofs}, subject to some auxiliary results which require a more extensive discussion. %In Section~\ref{sec:examples} we discuss some consequences of \cref{thm:persistence1}, in particular we show that fractional Brownian motion fits into the general framework. 
The subsequent three sections are devoted to this groundwork. The necessary background for the invariance results is developed in Sections~\ref{sec:palmdual} and~\ref{sec:stat} and Z\"ahle's construction of local time as a Palm distribution is discussed in Section~\ref{sec:LTasPalm}. The concluding Section~\ref{sec:concl} contains some historical remarks on related ideas.  Also, an appendix with some useful results from the literature and some auxiliary calculations is provided for convenience of the reader.

\section{Notation and main results}\label{sec:results1}
%THE PROCESS
We assume throughout the remainder of this article that $(X_t)_{t\in\mathds{R}}$ is a real-valued stochastic process defined on a complete probability space $(\Omega,\mathcal{F},\PP)$, which is \emph{continuous in probability}, i.e. \[
\lim_{h\to 0}\PP(|X_{t+h}-X_t|>\epsilon)=0,\; \text{ for all }t\in\mathds{R}.
\]
More importantly, $X$ is also taken to be {H-sssi}, i.e. satisfy the invariance relations
%is an almost surely càdlàg $H$-sssi process, i.e. it has right continuous paths with left limits and 
\[
(X_{t+s}-X_t)_{s\in\mathds{R}}\overset{d}{=}(X_{u+s}-X_{u})_{s\in\mathds{R}}, \;\text{ for any }t,u\in\mathds{R},\quad \quad \text{(stationarity of increments),}
\]
and
\[
(X_{rs})_{s\in\mathds{R}}\overset{d}{=}(r^H X_s)_{s\in\mathds{R}}, \;\text{ for every } r\in(0,1), \quad\quad \text{(}H\text{-self-similarity)},
\]
where $\overset{d}{=}$ denotes equality of finite dimensional distributions. We extend the definition of $H$-sssi to processes indexed by $[0,\infty)$ by restricting the stationarity of increments to positive shifts only. Note that, for stationary increment processes, continuity in probability follows from continuity in probability at time $0$. Moreover, self-similarity and continuity in probability at $0$ imply that $\PP(X_0=0)=1$ \cite[Lemma 1.1.1]{embrechts2009selfsimilar} and thus we may rewrite the stationary increment property as \[(X_{t+s}-X_t)_{s\in\mathds{R}}\overset{d}{=}(X_s)_{s\in\mathds{R}}, \text{ for all } t\in\mathds{R}.\] %We require a two more properties of $X$, which are however quite natural in the context given in the introduction and will be formulated below directly in terms of the local times.\\

%THE LOCAL TIMES
Let us now turn our attention to the main object of interest, the \emph{local time measure} of $X$ at $0$. We use the following notational conventions related to measures: $\mathfrak{B}(\cdot)$ denotes the Borel-$\sigma$-field of the space in brackets. If $\nu$ is a measure on $\mathfrak{B}(\mathds{R})$, the Borel-sets of the real line, and $(a,b)$ is an interval, we use the notation $\nu(a,b)$ instead of $\nu((a,b))$ and an analogous shorthand for closed and half open intervals. We frequently associate a measure $\nu$ with its \emph{additive functional} \[
\nu_t=\begin{cases}
\nu(0,t], &\text{ if }t>0,\\
-\nu(-t,0], &\text{ if }t\leq 0,
\end{cases} \]	
and vice versa. If $\nu$ is a random measure, then $(\nu_t)_{t\in\mathds{R}}$ is a non-decreasing stochastic process. We define the occupation measure of $X$ on $\mathfrak{B}(\mathds{R}\times \mathds{R})$ by setting
\begin{equation}\label{eq:occup}
\psi(A \times B)=\int_A \mathds{1}\{X_r \in B\} \di r,\quad A,B\in\mathfrak{B}(\mathds{R}),
\end{equation}
and recall that \eqref{eq:occup} yields a well-defined Borel measure as long as the trajectories of $X$ are Borel-functions. We say that $X$ {has local times}, or shorter $X$ is \textsf{LT},  if for each $n=1,2,\dots$, $\PP$-a.s.,
\[
\psi\big((-n,n)\times \cdot\big) \text{ is absolutely continuous w.r.t. Lebesgue measure}.
\]
Since $X$ has stationary increments it is in fact sufficient for $X$ to be \textsf{LT}, that the Radon-Nikodym density
$
%\phi_\ell(I,s)=
\nicefrac{\di \psi(I,\di y)}{\di y}
$ exists a.s. for an arbitrary open set $I$. Disintegration yields, for every $y$ outside some Lebesgue-negligible set $\mathcal{R}$, a locally finite measure $\ell^y$ on $\mathfrak{B}(\mathds{R})$ such that
\begin{equation}\label{eq:loctimedef}
\psi(A \times B)=\int_B \ell^y(A)\di y,\quad A,B\in\mathfrak{B}(\mathds{R}),
\end{equation}
and we call $\ell^y$ the local time of $X$ at level $y$. Moreover, it can be shown, see e.g. \cite[Lemma (3)]{geman1974local}, that for every $y\notin\mathcal{R}$, we can choose a version of $\ell^{y}(0,t]$ which is right-continuous in the time variable. A similar statement holds for $\ell^{y}(-t,0]$. Recall that we have $X(0)=0$ a.s., i.e. $\ell^0=\ell^{X(0)}$, if $0\notin\mathcal{R}$, but a priori the existence of $\ell^0$ cannot be guaranteed using the above construction of local times. This technical issue is addressed in \cref{sec:LTasPalm}, for the time being let us assume that $0\notin \mathcal{R}$ and that $\ell^0$ is well-defined.

We are chiefly interested in $\ell^0$ and therefore just abbreviate $\ell=\ell^0$ and call it local time, without reference to the level $0$. From the construction of $\ell$ we can straightforwardly derive a path-wise representation. Let \[
\ell_\epsilon^y(A):=\frac{1}{2\epsilon}\psi\left(A\times (y-\epsilon,y+\epsilon)\right),
\] 
then we have that for all $y\notin\mathcal{R}$
\begin{equation}\label{eq:LTpathwise}
\lim_{\epsilon\to 0}\ell_\epsilon^{y}(A)= \ell^{y}(A),\; A\in\mathfrak{B}(\mathds{R}),
\end{equation}
which shows that our definition agrees with the formula for $\ell$ given in the introduction.\\ %, unless $y=0$ is in the exceptional set. To exclude this possibility, we will use an averaging argument which we postpone to Section~\ref{sec:LTasPalm}. In many cases, e.g. when $X$ has continuous paths, then \eqref{eq:LTpathwise} does indeed hold for every $y\in \mathds{R}$.

We now introduce two further structural conditions on $\ell$ which are necessary for our derivation of the lower tail probabilities of $\ell(0,T]$. Let \[\mathsf{supp}(\nu)=\{t: \nu(t-\epsilon,t+\epsilon)>0 \text{ for all }\epsilon>0 \}\] denote the support of a measure $\nu$ on $\mathfrak{B}(\mathds{R})$ and recall that a nowhere dense set of real numbers is a set whoose (topological) closure does not contain any interval.
\begin{assp} With probability $1$,
\begin{equation}\label{eq:continuity}\tag*{\textsf{AL}}
\ell \text{ has no atoms},
\end{equation}
\begin{equation}\label{eq:singularity}\tag*{\textsf{ND}}
\mathsf{supp}(\ell) \text{ is nowhere dense.}
\end{equation}
\end{assp}
Condition \ref{eq:continuity} is equivalent to demanding that $(\ell_t)_{t\in\mathds{R}}$ be continuous a.s. Both conditions entail a rather erratic behaviour of the trajectories of $X$, which is not surprising in view of our main example, fractional Brownian motion. The validity of \ref{eq:continuity} and \ref{eq:singularity} are indispensable for the approach to local times taken in this paper. \footnote{However, the author strongly believes that the conditions listed are not minimal an that in particular condition~\ref{eq:singularity} is a consequence of condition~\ref{eq:continuity} for any $H$-sssi process which is continuous in probability, but is not aware of any proof of this implication.} 
To exclude pathologies, we also restrict ourselves to situations where $\ell$ is a.s. not the zero measure, we then say $\ell$ is \emph{non-zero}.
We are now prepared to state our main result.
%\begin{theorem}[Stability of inverse local time]\label{thm:lptrafo}
%Let $X$ be a non-degenerate, stochastically continuous $H$-sssi process with $H\in(0,1)$, satisfying \textsf{LT}, \textsf{ND} and \ref{eq:rangecond}. Denote by $(L_x)_{x\geq 0}$ the inverse local time process of $X$ on $[0,\infty)$. Then, for any $z>0$,
%	\[\mathds{E}\e^{-z L_x}=\e^{-c_X x z^{1-H}},\]
%where \[
%c_X = -\log \EE \e^{-\inf\{t:\ell[0,t]>1\}}
%\]
%and consequently \[
%L_x\sim S_{1-H}\left[\left(c_X x \cos\frac{\pi (1-H)}{2} \right)^{\nicefrac{1}{1-H}}\right], \quad x>0.
%\]	
%\end{theorem}
%The desired estimate for the persistence probability of the local time is obtained from \eqref{thm:lptrafo} by tail asymptotics for stable subordinators.
\begin{theorem}[Persistence of local time for $H$-sssi processes]\label{thm:persistence1}
Let $X$ be continuous in probability, $H$-sssi and \textsf{LT} and denote by $\ell$ its local time at $0$. If $\ell$ is non-zero and satisfies \ref{eq:continuity} and \ref{eq:singularity}, then there exists a constant $c_X\in(0,\infty)$ such that, as $T\to\infty$,
\[
\PP(\ell(0,T]\leq 1) \sim c_X T^{-(1-H)}.
\]
\end{theorem}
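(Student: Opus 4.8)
The plan is to reduce the statement to a small-ball asymptotic and then evaluate the resulting density through Palm theory. First I would exploit $H$-self-similarity to rescale. Writing the occupation mass $\psi\big((0,rt]\times(-\epsilon,\epsilon)\big)$ and substituting $u=rs$, the relation $(X_{rs})_s\overset{d}{=}(r^HX_s)_s$ turns the window $(-\epsilon,\epsilon)$ into $(-\epsilon r^{-H},\epsilon r^{-H})$; dividing by $2\epsilon$ and letting $\epsilon\downarrow0$ in \eqref{eq:LTpathwise} yields the local-time scaling
\[
(\ell(0,rt])_{t>0}\overset{d}{=}(r^{1-H}\ell(0,t])_{t>0},\qquad r>0.
\]
In particular $\ell(0,T]\overset{d}{=}T^{1-H}\ell(0,1]$, so $\PP(\ell(0,T]\le1)=\PP(\ell(0,1]\le T^{-(1-H)})$. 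Since $\ell$ is non-zero, $\ell(0,T]>0$ for large $T$, whence the scale-invariant quantity $\PP(\ell(0,1]=0)=\PP(\ell(0,T]=0)$ must vanish and $\ell(0,1]>0$ a.s. Setting $x=T^{-(1-H)}$, the theorem is therefore equivalent to the small-ball statement $\PP(\ell(0,1]\le x)\sim c_X\,x$ as $x\downarrow0$, i.e. to the distribution function of $\ell(0,1]$ being differentiable at $0^+$ with positive finite slope $c_X$. Using \textsf{AL} the additive functional $t\mapsto\ell(0,t]$ is continuous, so the inverse local time $L(x)=\inf\{t:\ell(0,t]>x\}$ is well defined, and the statement rephrases as the exact tail asymptotic $\PP(L(1)>y)\sim c_X\,y^{-(1-H)}$ for the first passage of the local-time clock to level $1$.

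Next I would invoke the Palm representation developed later in the paper. By Z\"ahle's construction (\cref{sec:LTasPalm}) the law of $\ell$ is the Palm distribution of a stationary, $\sigma$-finite distribution of random measures on $\mathds{R}$. Geometrically, \textsf{ND} makes $\mathsf{supp}(\ell)$ nowhere dense, so its complement splits into excursion intervals; indexing these by the value of the local-time clock yields a marked point process whose points are the local-time positions $u$ and whose marks are the real-time lengths $\lambda$ of the gaps. The content of the Palm construction is that this point process is \emph{stationary in the local-time coordinate} $u$: shifting the clock by a fixed amount of accumulated local time leaves its law invariant. This is the measure-theoretic form of cycle-stationarity alluded to in the introduction, and it guarantees that the intensity measure factorises as $\di u\,\rho(\di\lambda)$ on $[0,\infty)\times(0,\infty)$. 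In these terms $\{L(1)>y\}$ is, up to a boundary term, the event that some gap accumulated before local time $1$ has real-time length exceeding $y$.

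Then I would feed in the bivariate scaling, which is the point-process form of $H$-sssi: rescaling the local-time clock by a factor $a$ rescales real-time gap lengths by $a^{1/(1-H)}$. Applied to the factorised intensity, this invariance forces $\rho(\lambda,\infty)=c_X\,\lambda^{-(1-H)}$ for a single constant $c_X$, so that the first-moment count of long gaps satisfies $\EE\big[\#\{\text{gaps in }[0,1]\text{ of length}>y\}\big]=c_X\,y^{-(1-H)}$. That $c_X$ is strictly positive and finite is exactly where non-degeneracy enters: positivity because $\ell$ is non-zero and its support is a genuine nowhere-dense fractal carrying infinitely many excursions, finiteness because \textsf{AL} prevents mass from concentrating. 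Finally, to pass from this intensity identity to the sharp probability asymptotic one shows that the contribution of two or more long gaps is $o(y^{-(1-H)})$, so that a first/second moment (inclusion–exclusion) argument gives $\PP(L(1)>y)\sim\EE[\#\{\dots\}]=c_X\,y^{-(1-H)}$; asymptotically the event is carried by a single long excursion.

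I expect the main obstacle to lie in the Palm step rather than in the scaling. The scaling only delivers the exponent $1-H$; obtaining a true $\sim$ with an explicit constant requires (i) verifying that Z\"ahle's construction genuinely produces a cycle-stationary excursion process with a non-trivial, $\sigma$-finite intensity — the only place where abstract random-measure theory, as opposed to elementary scaling, is needed — and (ii) upgrading the first-moment estimate to the tail limit by controlling multiple long excursions and confirming $0<c_X<\infty$. Conditions \textsf{AL} and \textsf{ND} are used precisely to make both (i) and (ii) go through.
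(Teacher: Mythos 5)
Your overall architecture matches the paper's: reduce to the tail of the inverse local time $L_1$, represent the gaps of $\mathsf{supp}(\ell)$ as a marked point process that is stationary in the local-time coordinate (via the Palm construction, \cref{prop:LTisPALM} and \cref{lem:dual}), and let the bivariate scaling force the intensity $\hat{\Lambda}(\di u\times\di\lambda)=c\lambda^{-1-(1-H)}\di u\,\di\lambda$ (\cref{prop:scaling}, \cref{prop:powerlaw}). The genuine gap is in your last step, where you convert the intensity identity into the tail asymptotic. You approximate $\{L_1>y\}$ by the event that at least one gap with local-time position in $[0,1]$ exceeds $y$, and then approximate the probability of that event by its expectation. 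This needs two estimates: (i) $\PP(\text{at least two gaps in }[0,1]\text{ of length}>y)=o\bigl(y^{-(1-H)}\bigr)$, and (ii) $\PP\bigl(L_1>y,\ \text{no single gap}>y\bigr)=o\bigl(y^{-(1-H)}\bigr)$, a single-big-jump principle ruling out that many moderate gaps sum to exceed $y$. For a subordinator both follow from independence of the jumps, but here the gaps are strongly dependent functionals of one trajectory of $X$, and nothing in the Palm/scaling framework gives you the second factorial moment measure of $\hat{N}$ or any correlation control; stationarity and bi-scale-invariance pin down only the \emph{first} moment. You flag (i) without indicating a proof and dismiss (ii) as a boundary term, so the plan cannot be completed with the tools you have assembled.

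The paper sidesteps both issues. It uses the a.s.\ identity
\[
\hat{N}\bigl([0,1]\times(r,\infty)\bigr)=\lim_{n\to\infty}\sum_{k=1}^{n}\1\Bigl\{L_{\nicefrac{k}{n}}-L_{\nicefrac{(k-1)}{n}}>r\Bigr\}
\]
for the purely atomic measure $\hat{\ell}$, takes expectations, and observes that by the cycle-stationarity from \cref{lem:dual} every summand has probability exactly $\PP(L_{\nicefrac1n}>r)$. Hence $n\,\PP(L_{\nicefrac1n}>r)\to\tfrac{c}{1-H}r^{-(1-H)}$, and the $\nicefrac{1}{1-H}$-self-similarity of $L$ rewrites $\PP(L_{\nicefrac1n}>r)$ as $\PP\bigl(L_1>rn^{\nicefrac{1}{1-H}}\bigr)$, which is the desired tail directly --- no separation of one big gap from many small ones, and no second-moment input, is ever needed. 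If you want to keep your route you must prove (i) and (ii) by hand; the cleaner fix is to replace the event ``some gap exceeds $y$'' by the Riemann-sum count of local-time increments exceeding $r$, as in the paper's proof.
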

There are two important observations needed for the proof of \cref{thm:persistence1}. The first one is a result stating that the lengths of the excursions of $X$ from $0$ follow, in a certain sense, a hyperbolic distribution. The precise formulation is given in Section~\ref{sec:proofs} as \cref{prop:powerlaw}. The second one is that, under \ref{eq:continuity} and \ref{eq:singularity}, $\ell$ is entirely encoded in the excursions of $X$ from $0$, which is manifested in the fact that the right-continuous inverse of $(\ell_t)_{t\in \mathds{R}}$ is a.s. a pure jump process. Before we develop the details we devote the remainder of this section to some of the implications of \cref{thm:persistence1}.
To this end we provide two example processes, for which \cref{thm:persistence1} can be applied. The first one, naturally, is fractional Brownian motion. The local times and level sets of FBM have been studied by several authors, the pioneering work was done by Kahane in the late 1960's, see in particular \cite[Chapter 18]{kahane1985some}.
\begin{theorem}[Persistence of local time for FBM]\label{thm:persistence2}
	Let $B$ denote fractional Brownian motion with Hurst index $H\in(0,1)$ and denote by $\ell$ its local time at $0$.  Then there is a constant $c_B\in(0,\infty)$ such that, as $T\to\infty$,
	\[
	\PP(\ell(0,T]\leq 1)\sim c_B T^{-(1-H)}.
	\]
\end{theorem}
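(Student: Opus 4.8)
The plan is to deduce Theorem~\ref{thm:persistence2} as a direct instance of the general Theorem~\ref{thm:persistence1}. Since fractional Brownian motion $B$ with Hurst index $H\in(0,1)$ is by definition $H$-self-similar with stationary increments, and since it has continuous sample paths (hence is continuous in probability), the only remaining task is to verify the three structural hypotheses of Theorem~\ref{thm:persistence1}: that $B$ is \textsf{LT} with a well-defined and non-zero local time $\ell=\ell^0$ at level $0$, and that $\ell$ satisfies the conditions \ref{eq:continuity} and \ref{eq:singularity}. Once these are checked, the asymptotic $\PP(\ell(0,T]\leq 1)\sim c_B T^{-(1-H)}$ follows immediately from Theorem~\ref{thm:persistence1} with $c_B=c_X$.

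First I would invoke the classical theory of local times of FBM, as developed by Kahane and by Berman and recorded for instance in \cite[Chapter 18]{kahane1985some} and in the survey \cite{geman1974local}. For every $H\in(0,1)$, FBM admits a jointly continuous local time $\ell^y(0,t]$ in $(y,t)$; in particular the exceptional set $\mathcal{R}$ is empty, so $0\notin\mathcal{R}$ and $\ell^0$ is well-defined, which settles the \textsf{LT} property together with the technical existence issue flagged in Section~\ref{sec:results1}. The joint continuity in the level variable gives that $y\mapsto\ell^y(0,t]$ is continuous, and continuity in the time variable $t\mapsto\ell(0,t]$ is precisely condition \ref{eq:continuity} (no atoms). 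That $\ell$ is non-zero is clear because $B$ is recurrent and visits $0$ on a set of positive local-time measure with probability one.

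The heart of the verification is condition \ref{eq:singularity}, that $\mathsf{supp}(\ell)$ is nowhere dense with probability one. The support of the local time at $0$ coincides with the zero set $Z=\{t: B_t=0\}$ of FBM. It is a well-known fact that $Z$ is a.s.\ a closed set of Lebesgue measure zero with Hausdorff (and box-counting) dimension $1-H<1$; see the references on level sets of FBM cited in the introduction, e.g.\ \cite{kahane1985some,Mukeru2018}. A closed set of zero Lebesgue measure contains no interval, hence is nowhere dense, so \ref{eq:singularity} holds almost surely. I would note that the agreement $\mathsf{supp}(\ell)=Z$ follows from the pathwise approximation \eqref{eq:LTpathwise}: $\ell$ charges a neighbourhood of $t$ precisely when $B$ spends positive time arbitrarily near $0$ in that neighbourhood, which happens exactly at accumulation points of $Z$, and by path continuity the closed support equals the closed zero set.

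The main obstacle is not conceptual but bibliographic: all the needed properties of FBM local times are classical, so the real work is to cite them cleanly and to confirm that joint continuity of $\ell^y$ holds across the entire range $H\in(0,1)$ rather than only under a restrictive regularity threshold. The care point is that some standard references state joint continuity under a local nondeterminism or Hölder condition; one must ensure the version cited covers all $H\in(0,1)$, which it does for one-dimensional FBM. Apart from that, the proof is a routine specialisation, and I would keep it short, essentially a verification checklist feeding into Theorem~\ref{thm:persistence1}.
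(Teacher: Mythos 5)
Your proposal is correct and follows essentially the same route as the paper: specialise \cref{thm:persistence1} by verifying \ref{eq:continuity} via the classical continuity of fractional Brownian local time and \ref{eq:singularity} via the zero set being closed with dimension $1-H<1$ (the paper deduces ``nowhere dense'' from the non-integer Hausdorff dimension, you from Lebesgue-null-ness --- an immaterial difference). Note the paper only needs the inclusion $\mathsf{supp}(\ell)\subset\mathcal{Z}_B$ rather than the equality you assert, which slightly simplifies that step.
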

\begin{proof}
	We only need to verify conditions \ref{eq:continuity} and \ref{eq:singularity}. The continuity in time of fractional Brownian local time is well known, e.g. by applying the criterion proposed by Geman \cite{geman1976note} for Gaussian processes. Let $\mathcal{Z}_B$ denote the set of zeroes of the FBM trajectory. Kahane \cite{kahane1985some} showed that the Hausdorff dimension $\dim \mathcal{Z}_B$ of the zero set equals $1-H<1$ a.s. Since $B$ is a.s. continuous, it follows that $\mathcal{Z}_B$ is closed. Together with its non-integer dimension this implies that $\mathcal{Z}_B$ is nowhere dense and therefore $\mathsf{supp}(\ell)$ is nowhere dense, since $\mathsf{supp}(\ell)\subset \mathcal{Z}_B$.
\end{proof}
To illustrate the power of our approach, we now discuss a non-Gaussian example, namely the Rosenblatt process $R=(R_t)_{t\in\mathds{R}}$. This process was introduced by Taqqu \cite{taqqu1975weak}, see also \cite{dobrushin1979non} and arises as a limiting process in so-called (functional) non-central limit theorems, analogously to FBM appearing in central limit theorems for correlated random walks. We will not give a formal definition of the Rosenblatt process, an ad hoc definition can be given using an iterated Wiener-It\=o integral, see \cite{taqqu1978representation}. Instead, we restrict ourselves to listing the properties of $R$ which are relevant to verify the local time persistence result. A comprehensive source for all stated facts is Taqqu's survey article \cite{taqqu2011rosenblatt}. Unlike FBM, the Rosenblatt process can only be defined for $H\in(\nicefrac12, 1)$. For any such $H$, $R$ is uniquely defined (up to multiplication by a constant) and satisfies
\begin{itemize}
	\item $R$ has H\"older continuous paths a.s. for any H\"older exponent $\gamma< H$,
	\item $R$ is $H$-sssi.
\end{itemize}
\begin{theorem}[Persistence of local time for the Rosenblatt process]\label{thm:persistence3}
	Let $R$ denote the $H$-sssi Rosenblatt process, $H\in(\nicefrac12,1)$ and denote by $\ell$ its local time at $0$. Then there is a constant $c_R\in(0,\infty)$ such that, as  $T\to\infty$,
	\[
	\PP(\ell(0,T]\leq 1)\sim c_R T^{-(1-H)}
	\]
\end{theorem}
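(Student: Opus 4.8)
The plan is to deduce \cref{thm:persistence3} from the general statement \cref{thm:persistence1}, exactly as in the Gaussian case; the entire task reduces to checking that the Rosenblatt process satisfies the hypotheses \textsf{LT}, \ref{eq:continuity} and \ref{eq:singularity}, is continuous in probability, and has non-zero local time. Continuity in probability is immediate, since $R$ has a.s. H\"older continuous paths, and $H$-self-similarity together with stationarity of increments is part of the definition of $R$. The real content lies in the regularity of the local time, and here the proof of \cref{thm:persistence2} cannot be copied verbatim because $R$ is not Gaussian, so that the Gaussian criterion of Geman is unavailable.

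To establish the \textsf{LT} property and condition \ref{eq:continuity} I would argue through the occupation-density formalism rather than through any Gaussian-specific tool. Writing $\phi(u)=\EE[\e^{iuR_1}]$ for the characteristic function of the one-dimensional Rosenblatt distribution, the $H$-sssi property gives $R_t-R_s\overset{d}{=}|t-s|^H R_1$, so that for every bounded interval $J$
\[
\int_J\int_J\int_{\mathds{R}}\big|\EE[\e^{iu(R_t-R_s)}]\big|\,\di u\,\di s\,\di t=\int_J\int_J\int_{\mathds{R}}\big|\phi(|t-s|^H u)\big|\,\di u\,\di s\,\di t.
\]
After the substitution $v=|t-s|^H u$ this equals $\|\phi\|_{L^1}\int_J\int_J|t-s|^{-H}\,\di s\,\di t$, which is finite as soon as $\phi\in L^1(\mathds{R})$, since $H<1$ makes the diagonal singularity integrable. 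Finiteness of the left-hand side is precisely Berman's sufficient condition for the existence of a square-integrable occupation density, i.e. for \textsf{LT}. The same circle of ideas delivers condition \ref{eq:continuity}: continuity of $t\mapsto\ell_t$ (equivalently, absence of atoms of $\ell$) follows from higher-order occupation-density estimates, for which the decisive input is a quantitative decay rate for $\phi$, ensuring $\int_{\mathds{R}}|u|^\beta|\phi(u)|\,\di u<\infty$ for a suitable $\beta>0$. The characteristic function of the Rosenblatt law admits an explicit L\'evy-type series representation in terms of the eigenvalues of an associated integral operator, from which such decay can be read off; alternatively, one may invoke the local-nondeterminism property of $R$ established in the literature, which yields joint continuity of the local time and a fortiori \ref{eq:continuity}.

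For condition \ref{eq:singularity} I would replicate the geometric argument of \cref{thm:persistence2}. Let $\mathcal{Z}_R=\{t:R_t=0\}$ denote the zero set. Since $R$ has continuous paths a.s., $\mathcal{Z}_R$ is closed, so it suffices to show that it contains no interval; this follows once we know $\dim_{\mathrm H}\mathcal{Z}_R<1$ a.s. By self-similarity and the existence of a continuous local time, the zero set is a random fractal whose Hausdorff dimension equals $1-H$, in complete analogy with FBM; since $H\in(\nicefrac12,1)$ we have $1-H<1$, so $\mathcal{Z}_R$ is nowhere dense, and $\mathsf{supp}(\ell)\subset\mathcal{Z}_R$ gives \ref{eq:singularity}. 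Finally, $\ell$ is non-zero because the occupation density is a.s. strictly positive at the recurrent level $0$, which again is a consequence of self-similarity. With all hypotheses verified, \cref{thm:persistence1} applies and yields the claimed asymptotics.

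I anticipate that the main obstacle is the verification of \ref{eq:continuity} (and, to a lesser extent, \textsf{LT}): unlike in the Gaussian setting there is no one-line criterion, and one must either extract explicit decay estimates for the Rosenblatt characteristic function from its series representation or transcribe a local-nondeterminism argument, both of which are genuinely non-Gaussian pieces of analysis. The dimension computation for $\mathcal{Z}_R$ is conceptually routine but also relies on the non-Gaussian local-time theory rather than on the Gaussian results cited for FBM.
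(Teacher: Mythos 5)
Your overall strategy is the paper's: reduce everything to \cref{thm:persistence1} and verify \textsf{LT}, \ref{eq:continuity}, \ref{eq:singularity} and non-triviality for $R$. Two remarks on the route before the main issue. First, your premise that ``the Gaussian criterion of Geman is unavailable'' misreads the situation: the paper verifies \ref{eq:continuity} via Geman's \emph{general} sufficient criterion (Theorem B(I) of \cite{geman1976note}), restated as $\int_{-1}^1\sup_{\epsilon>0}\epsilon^{-1}\PP(|R_s|<\epsilon)\,\di s<\infty$, which is not Gaussian-specific. Combined with the smoothness of the density of $R_1$ (Veillette--Taqqu) and self-similarity, this reduces to the integrability of $s\mapsto s^{-H}f_1(0)$ near $0$ --- a one-line check. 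Your Fourier-analytic route (Berman's condition for \textsf{LT}, then decay of the Rosenblatt characteristic function or local nondeterminism for \ref{eq:continuity}) is workable in principle, since $|\phi|$ does decay rapidly by the eigenvalue product representation, but it is substantially heavier and you leave the decisive estimates as black boxes; the paper instead simply cites \cite{shevchenko2011properties} for existence of square-integrable local times and then runs the Geman criterion.

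The genuine gap is in your verification of \ref{eq:singularity}. You assert that ``by self-similarity and the existence of a continuous local time, the zero set is a random fractal whose Hausdorff dimension equals $1-H$.'' The ingredients you cite point in the wrong direction: a continuous (or square-integrable) local time yields, via Frostman-type arguments, a \emph{lower} bound on $\dim_{\mathrm H}\mathcal{Z}_R$, whereas nowhere-density requires an \emph{upper} bound $\dim_{\mathrm H}\mathcal{Z}_R<1$. Self-similarity alone does not supply that upper bound either, and the exact equality $\dim_{\mathrm H}\mathcal{Z}_R=1-H$ is not something you derive. The correct and elementary fix --- which is what the paper does --- is to use the a.s.\ $\gamma$-H\"older continuity of the paths of $R$ for any $\gamma<H$: for a deterministic $\gamma$-H\"older function the level sets have Hausdorff dimension at most $1-\gamma<1$, and a closed set of dimension strictly less than $1$ contains no interval, so $\mathsf{supp}(\ell)\subset\mathcal{Z}_R$ is nowhere dense. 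You already have H\"older continuity on the table (you use it for continuity in probability), so the repair is immediate, but as written the justification of \ref{eq:singularity} does not stand.
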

\begin{proof}
	In principle, we can apply the same arguments as for FBM, but the corresponding preliminary results for the Rosenblatt process needed to verify conditions \ref{eq:continuity} and \ref{eq:singularity} are less well known. We thus give a slightly more explicit version of the argument. Existence of square integrable (in space) local times for $R$ has been shown in \cite{shevchenko2011properties}. Let us show continuity of the cumulative local time process $(\ell_t)_{t\geq 0}$. Geman's sufficient criterion \cite[Therorem B (I)]{geman1976note} for the continuity of the local time can be restated as follows for two-sided stationary increment processes:
	\begin{equation}\label{eq:gemancrit}
	\int_{-1}^1 \sup_{\epsilon>0}\frac{1}{\epsilon}\PP(|R_s|<\epsilon) \di s <\infty.
	\end{equation} 
	To show that \eqref{eq:gemancrit} holds, we use results of Veillette and Taqqu \cite{veillette2013properties}, who studied the distribution of $R_1$ extensively. In particular, they show that $R_1$ has a smooth density and the same holds, by self-similarity, for $R_s, s\in{\mathds{R}\setminus\{0\}}$. Let $f_s$ denote the density of $R_s$. Then \eqref{eq:gemancrit} is satisfied, if
	\[
	g(s):=\limsup_{\epsilon\downarrow 0}\frac{1}{2\epsilon}\int_{-\epsilon}^\epsilon f_s(u)\di u
	\] 
	is integrable around the origin. But by smoothness of $f_s$, we have $g(s)=f_s(0)<\infty$ and thus $g(s)=s^{-H}g(1)$ for any $s>0$. Consequently, $g$ is integrable around $0$ and \eqref{eq:gemancrit} is satisfied.
	
	Turning to \ref{eq:singularity}, we argue using the well known fact, see e.g \cite{kahane1985some}, about H\"older continuity and Hausdorff dimension of the level sets of a real function $f$: If $f$ is $\gamma$-H\"older continuous with $\gamma\in(0,1)$ then the Hausdorff dimension of its level sets is at most $1-\gamma.$ This is sufficient to complete the argument in the same fashion as for FBM. 
\end{proof}

\begin{rmk}
	The following general recipe may be used to verify the conditions of \cref{thm:persistence1}: If an $H$-sssi \textsf{LT} process has sufficiently high moments, then H\"older-continuity of the paths can always be inferred from the Kolmogorov-Chentsov Continuity Theorem \cite{chentsov1956weak} and \ref{eq:singularity} is then always satisfied. Additionally, if the transition density at $0$ exists, then \ref{eq:continuity} is always satisfied.
\end{rmk}

\section{Proof of {\cref{thm:persistence1}}}\label{sec:proofs}

%It remains to combine the results presented in the previous sections into a proof of \cref{thm:persistence1}.
Let $X$ be as in \cref{thm:persistence1}% and let $\PP_\ell$ denote the distribution of the local time $\ell$ as a random measure on the real line.
and let $\ell$ be its local time at $0$. %By \cref{prop:LTisPALM}, $\PP_\ell$ is a Palm distribution.
Recall that the corresponding additive functional $(\ell_t)_{t\in\mathds{R}}$, is given by
\[
\ell_t=\begin{cases}
\ell(0,t], & \text{ if } t>0 \\
-\ell(t,0], & \text{ if } t\leq 0,  
\end{cases}
\]
and denote its right-continuous inverse by $L=(L_{x})_{x\in\mathds{R}}$. It is straightforward from \eqref{eq:LTpathwise} that $(\ell_t)_{t\in\mathds{R}}$ is $(1-H)$-self-similar, hence $L$ is $\nicefrac{1}{1-H}$ self-similar. By \ref{eq:continuity}, $\ell$ has no atoms and hence $L$ is strictly increasing. By \ref{eq:singularity}, $\mathsf{supp}(\ell)$ is nowhere dense. $L$ is then a monotone pure jump process. Consequently, it induces a purely atomic random measure $\hat{\ell}\in\mathcal{M}_0$. We say a random measure is $\beta$-scale-invariant, if its additive functional is a $\beta$-self-similar process. It thus follows from self-similarity of $L$ that $\hat\ell$ is $\nicefrac{1}{1-H}$-scale-invariant. Because $\hat{\ell}$ is purely atomic, we may identify it with a point process on $\mathds{R}\times (0,\infty)$, see \cref{lem:measuretopp}. This point process is denoted by $\hat{N}$ and its intensity measure by $\hat{\Lambda}.$ The key observation of our argument is that $\hat{\Lambda}$ is entirely determined (up to a multiplicative constant) by the invariance properties of $\hat{\ell}$.
\begin{prop}\label{prop:powerlaw}
	Let $\hat{N}$ denote the point process representation of the inverse local time measure $\hat{\ell}$. Then the corresponding intensity measure $\hat{\Lambda}$ is given by
	\begin{equation}\label{eq:powerlawrep}
	\hat{\Lambda} (\di x \times \di m)= cm^{-1-(1-H)}\di x\di m,
	\end{equation}
	for some finite constant $c>0$.
\end{prop}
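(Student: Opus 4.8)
The plan is to pin down $\hat\Lambda$ from two invariances of $\hat N$, one expressing self-similarity and one expressing the stationary-increment structure, and to show that together they leave only the claimed power law. Throughout I write points of $\hat N$ as pairs $(x,m)$, where $x$ is the position in the local-time scale and $m$ the corresponding jump of $L$, i.e. the length of an excursion of $X$ from $0$. First I exploit scale invariance. Since $L$ is $\nicefrac{1}{1-H}$-self-similar, the rescaled process $L^{(r)}_x:=r^{-1/(1-H)}L_{rx}$ has the law of $L$ for every $r>0$; matching jumps, a jump $(x_i,m_i)$ of $L$ corresponds to a jump $(x_i/r,\,r^{-1/(1-H)}m_i)$ of $L^{(r)}$, so that $\hat N\overset{d}{=}D_r\hat N$ for the diagonal dilation $D_r(x,m)=(rx,\,r^{1/(1-H)}m)$. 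Passing to intensity measures yields $(D_r)_*\hat\Lambda=\hat\Lambda$, that is,
\[
\int f(rx,\,r^{1/(1-H)}m)\,\hat\Lambda(\di x,\di m)=\int f(x,m)\,\hat\Lambda(\di x,\di m),\qquad r>0.
\]

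The second ingredient is translation invariance in the local-time coordinate. Here I would invoke the Palm-theoretic description of $\ell$ developed in \cref{sec:palmdual,sec:stat,sec:LTasPalm}: because $\ell$ arises as a Palm distribution, the inverse measure $\hat\ell$, equivalently $\hat N$, is stationary under the shifts $x\mapsto x+s$ of the local-time variable, this being the measure-theoretic form of cycle-stationarity in the sense of \cite{thorisson1995time}. Consequently $\hat\Lambda$ is invariant under $(x,m)\mapsto(x+s,m)$ for all $s\in\mathds{R}$. Invariance of a $\sigma$-finite measure under all shifts of the first coordinate forces, by uniqueness of Haar measure on $(\mathds{R},+)$ together with disintegration, the product form
\[
\hat\Lambda(\di x\times\di m)=\di x\,\rho(\di m)
\]
for some measure $\rho$ on $(0,\infty)$.

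It remains to solve the dilation equation for $\rho$. Substituting the product form into the scaling identity of the first paragraph, taking $f(x,m)=g(x)h(m)$ and cancelling the common factor $\int g(x)\,\di x$ after the change of variables $x\mapsto rx$, leaves $(\sigma_s)_*\rho=s^{1-H}\rho$ for all $s>0$, where $\sigma_s(m)=sm$. Pushing $\rho$ forward under $m\mapsto\log m$ converts this into the translation relation $(\tau_a)_*\mu=e^{(1-H)a}\mu$ on $\mathds{R}$; since multiplying $\mu$ by $e^{(1-H)\,\cdot}$ produces a genuinely translation-invariant measure, Haar uniqueness again forces $\mu$ to be an exponential times Lebesgue measure, and transforming back gives $\rho(\di m)=c\,m^{-1-(1-H)}\di m$. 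Finiteness $c<\infty$ follows from local finiteness of $\hat\Lambda$ (so that $\rho[\epsilon,\infty)<\infty$), and positivity $c>0$ from the standing assumption that $\ell$ is non-zero, which ensures $\hat N\not\equiv0$ and hence $\hat\Lambda\neq0$.

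The main obstacle is the second step. The scaling identity and the final dilation computation are elementary, but the translation invariance in the local-time coordinate is genuinely not a consequence of self-similarity alone: it encodes the stationarity of the increments of $X$ and is exactly where the Palm calculus developed in the later sections is required. A secondary, purely technical point is to confirm that $\hat\Lambda$ is $\sigma$-finite and locally finite on $\mathds{R}\times(0,\infty)$ — finite on sets bounded away from $m=0$ — so that the disintegration, the two appeals to Haar uniqueness, and the change of variables are all legitimate despite $\hat\Lambda$ carrying infinite total mass, the density $m^{-1-(1-H)}$ being non-integrable at the origin.
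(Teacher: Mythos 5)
Your proposal is correct and follows essentially the same route as the paper: bi-scale-invariance of $\hat N$ from the $\nicefrac{1}{1-H}$-self-similarity of $L$, stationarity in the local-time coordinate from the Palm/cycle-stationarity duality (the paper's \cref{lem:dual} applied via \cref{prop:LTisPALM}), and then the determination of the intensity by Haar uniqueness in logarithmic coordinates, which is exactly the content of the paper's \cref{prop:scaling} that you have simply written out inline. You also correctly identify the translation invariance as the step that genuinely requires the Palm machinery rather than self-similarity alone.
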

We postpone the proof of \cref{prop:powerlaw} to the end of \cref{sec:LTasPalm}, but note that subject to the validity of \cref{prop:powerlaw}, all that remains to establish \cref{thm:persistence1} is to relate the tail behaviour of $\hat\Lambda$ to the tail behaviour of $\ell$.

\begin{proof}[Proof of {\cref{thm:persistence1}}]
	We observe that 
	\[
	\PP(\ell(0,T]\leq 1)= \PP(\ell(0,T]< 1) =\PP(L_1>T),\quad T>0,
	\]
	i.e. we obtain lower tail bounds for $\ell_t$ from upper tail bounds for $L_1$. Let $\hat{N}$ denote the point process representation of $\hat{\ell}$ and note that $L_1=\int_0^1\int_0^\infty m \hat{N}(\di x \times \di m)$.  Fix any $r>0$. Since $\hat{N}$ is a simple point process and $\hat{\ell}$ is purely atomic, we have by standard results from random measure theory, e.g. \cite[Prop. 9.1.III(v)]{DVJII},
	\begin{equation}\label{eq:ineq1}
	\hat{N}([0,1]\times(r,\infty))=\lim_{n\to\infty}\sum_{k=1}^{n}\mathds{1}\left\{\hat{\ell}\left(\frac{k-1}{n},\frac{k}{n}\right]>r\right\}, \text{ a.s}.
	\end{equation}
	Set \[
	P_{k,n}:=\PP\left(\hat{\ell}\left(\frac{k-1}{n},\frac{k}{n}\right] > r\right), \quad 1\leq k\leq n, n=1,2,\dots,
	\]
	taking expectations in \eqref{eq:ineq1}, we obtain
	\[
\limsup_{n\to\infty}\sum_{k=1}^n P_{k,n}\leq \EE \hat{N}([0,1]\times(r,\infty))\leq	\liminf_{n\to\infty}\sum_{k=1}^nP_{k,n},
	\]
		having applied Fatou's Lemma and the inverse Fatou's Lemma, i.e.\[\sum_{k=1}^n P_{k,n}\overset{n\to\infty}{\longrightarrow} \EE \hat{N}([0,1]\times(r,\infty)).\] For any $\delta\in (0,1)$ we may thus fix $1\leq N_\delta<\infty$ such that
		\begin{equation}\label{eq:sandwich}
		(1-\delta)\int_{r}^\infty c m^{-1-(1-H)}\di m \leq \sum_{k=1}^{N_\delta} P_{k,N_{\delta}} \leq (1+\delta)\int_{r}^\infty c m^{-1-(1-H)}\di m, 	
		\end{equation}
		where we have used that \[\EE \hat{N}([0,1]\times(r,\infty))= \hat{\Lambda}([0,1]\times(r,\infty))= \int_{r}^\infty c m^{-1-(1-H)}\di m, \]
		according to \cref{prop:powerlaw}. Using that $P_{k,N_{\delta}}=\PP(L_{\nicefrac{1}{N_\delta}}>r)$ by the stationarity of the increments of $L$, we can rewrite \eqref{eq:sandwich} as
		\begin{equation*}%\label{eq:sandwich1}
		(1-\delta)\frac{c}{1-H}r^{-(1-H)} \leq N_{\delta}\PP(L_{\nicefrac{1}{N_\delta}}>r) \leq (1+\delta)\frac{c}{1-H}r^{-(1-H)}, 	
		\end{equation*}
		and applying the $\nicefrac{1}{1-H}$-self-similarity of $L$ and rearranging terms yields
		 \begin{equation*}%\label{eq:sandwich1}
		(1-\delta)\frac{c}{1-H}\left({r}{N_\delta ^{\nicefrac{1}{1-H}}}\right)^{-(1-H)} \leq \PP\left(L_1>rN_{\delta}^{\nicefrac{1}{1-H}}\right) \leq (1+\delta)\frac{c}{1-H}\left({r}{N_\delta ^{\nicefrac{1}{1-H}}}\right)^{-(1-H)}, 	
		\end{equation*}
		i.e.
		\[
		\PP(L_1>T)= \frac{c}{1-H}(1+o(1))T^{-(1-H)}, \quad \text{ as }T\to\infty,
		\]
		and \cref{thm:persistence1} is proved, subject to \cref{prop:powerlaw}.
\end{proof}

%\section{Applications of {\cref{thm:persistence1}} }\label{sec:examples}

\section{Palm distributions and duality}\label{sec:palmdual}
We now provide the background needed to complete the proof of \cref{thm:persistence1}, starting with some basics of random measure theory, namely we introduce Palm distributions and discuss some of their key properties.

Here and in the following two sections, we take a general point of view on $\ell$ and its distributional properties as a random measure, in particular we can forget about the process $X$ from which it is derived. We instead consider some complete measurable space $(\Omega,\mathfrak{F})$, equipped with a $\sigma$-finite measure $Q$. We denote by $E_Q(\cdot)$ integration with respect to $Q$. A measurable map $\xi$ from $\Omega$ into the space $(\mathcal{M},\mathfrak{B}(\mathcal{M}))$ of locally finite measures on $\mathds{R}$, equipped with its Borel-$\sigma$-field is called a \emph{random measure}, even though we stress that $Q$ need not be a probability distribution. We call $Q$ a \emph{quasi-distribution}, to distinguish it from the measures which are elements of $\mathcal{M}$ and set $Q_\xi=Q\circ\xi^{-1},$ i.e.
\[
Q_{\xi}(G)=Q(\xi\in G), \quad G\in\mathfrak{B}(\mathcal{M}).
\]
 The intensity measure of $\xi$ under $Q$ (or $Q_\xi$) is given by
\[
\Lambda_\xi (A):=E_{Q}\xi(A)=\int \nu(A)Q_{\xi}(\di \nu), \; A\in\mathfrak{B}(\mathds{R}).
\]
In what follows, we frequently consider $Q$ directly as a quasi-distribution on $\mathfrak{B}(\mathcal{M})$ without explicit reference to a (canonical) random measure $\xi$ with distribution $Q$, consequently we denote the associated intensity measure by $\Lambda_Q$.
Whenever we discuss a probability measure, then we indicate this by using blackboard-face symbols, e.g. $\mathds{P},\mathds{P}_\xi,\mathds{E}_\xi,$ etc. Futhermore, to formalise our discussion of stationarity properties, we use the shift group $(\theta_t)_{t\in\mathds{R}}$ on $\mathds{R}$. Note that the $\theta_t, t\in\mathds{R},$ act measurably on $(\mathcal{M},\mathfrak{B}(\mathcal{M})),$ and in particular we have that
\[
\theta_{-t}\nu(A)=\nu(A+t)=\nu\circ\theta_t(A), \quad A \in\mathfrak{B}(\mathds{R}),
\]
where $A+t:=\{a+t, a\in A\}$. A quasi-distribution $Q$ on $\mathfrak{B}(\mathcal{M})$ is invariant under the shifts $(\theta_t)_{t\in\mathds{R}}$, i.e. \emph{stationary}, if
\[
Q(G)=Q(\{ \theta_t\nu, \nu\in G \}), \; t\in\mathds{R}.
\]
If $Q$ is stationary and satisfies \[\lambda_Q:= \Lambda_Q\left((0,1]\right)\in (0,\infty)\] then it follows immediately that $\Lambda_Q(\di s)=\lambda_Q \di s$, i.e. $\Lambda_Q$ is a constant multiple of Lebesgue measure. We call $\lambda_Q\in[0,\infty]$ the \emph{intensity} of $Q$.
Note that the quasi-distribution $Q$ is a place holder for a stationarised version of the distribution of the local time measure $\ell$. The corresponding construction is given in Section \ref{sec:LTasPalm}. At the moment it is more beneficial to stay in the general setting. However, the following assumptions on the support of $Q$
\[\mathcal{S}_{Q}:=\mathsf{supp}(Q)=\mathcal{M}\setminus \bigcup_{N\in\mathfrak{B}(\mathcal{M}): Q(N)=0} N \]
are justified in view of our applications. Let $o$ denote the $0$-measure, then %(recall that $(\nu_t)_{t\in\mathds{R}}$ denotes the additive functional associated to the measure $\nu$):
\[ o \notin \mathcal{S}_{Q},\text{ i.e. $Q$ is \emph{non-zero},}\]
and
\begin{equation} \label{eq:RCstar}\tag*{\textsf{R}}
\{\nu_t, t\in\mathds{R}\}=\mathds{R}, \; \text{ for all } \nu\in \mathcal{S}_{Q}.
\end{equation} 	
We have $\lambda_Q>0$, if $Q$ is non-zero and stationary. Note that \ref{eq:RCstar} is in congruence with condition \ref{eq:continuity}, the details are given in \cref{lem:equivalentconditions}.

We now turn to the subject of Palm distributions of a stationary, non-zero quasi-distribution $Q$. Fix any $A\in\mathfrak{B}(\mathds{R})$ with finite and positive Lebesgue measure, then the quasi-distribution defined by
\[
P_Q(G)=\frac{1}{\int_A \di s }\int \int_A \mathds{1}\{\theta_{-t}\nu\in G \} \nu (\di t) Q(\di \nu), \quad G\in \mathfrak{B}(\mathcal{M}),
\]
is independent of the choice of $A$, see \cref{lem:PDA}. It is referred to as the \emph{Palm measure} of $Q$. If $Q$ has finite intensity $\lambda_Q$ then a probability distribution is defined by
\[
\PP_Q(G)= \frac{P_Q (G)}{\lambda_Q},\quad G\in \mathfrak{B}(\mathcal{M}),
\]
and is called the \emph{Palm distribution} of $Q$. Conversely, we say that a probability distribution $\PP$ (on $\mathfrak{B}(\mathcal{M})$) is \emph{Palm-distributed}, if it is the Palm distribution of some stationary quasi-distribution $Q$. Similarly, a random measure is Palm distributed if its distribution is the Palm distribution of some $Q$. It is well known, see e.g. \cite[Lemma 3.3]{Z1}, that the almost sure properties of $Q$ and $\PP_Q$ agree (up to shifts). We may thus assume that any $\nu\in \mathcal{S}(\PP_Q)$ has the property indicated in \ref{eq:RCstar}. The latter entails that the right continuous inverse $(\nu^{-1}_x)_{x\in\mathds{R}}$ of the additive functional $(\nu_t)_{t\in\mathds{R}}$ of $\nu$ is strictly increasing and that we have \[\nu_{\nu^{-1}_x}=x,\quad \text{ for all } x\in\mathds{R}.\] This allows us to define the measurable group of \emph{random time shifts}
\[
\hat{\theta}_x:=\hat{\theta}_x(\nu):=\theta_{\nu^{-1}_x},\; x\in\mathds{R}.
\]
Note that we may pick $A=[0,1]$ in the definition of $P_Q$ and change variables according to the random time change to obtain the alternative representation
\begin{equation}\label{eq:tcPalm}
\PP_Q(G)=\frac{1}{\lambda_Q}\int \int_0^{\nu_1}\mathds{1}\{ \hat{\theta}_{-x}\nu\in G\} \di x Q(\di \nu ), \; G\in\mathfrak{B}(\mathcal{M}).
\end{equation}
The following basic lemma is crucial for our argument. It is a generalisation of \cite[Lemma 2.3]{miyazawa1994modified}, see also \cite[Theorem 3.1]{miyazawa1900palm}.
\begin{lem}[Duality lemma]\label{lem:dual}
	Let $Q$ be a stationary non-zero measure on $\mathfrak{B}(\mathcal{M})$ with $\lambda_Q<\infty$, then its Palm distribution $\PP_Q$ is stationary with respect to the random shifts $(\hat{\theta}_x)_{x\in\mathds{R}}$.
\end{lem}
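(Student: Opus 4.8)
The plan is to establish the stronger pointwise statement that, for every bounded measurable $f\geq 0$ and every $y\in\mathds{R}$,
\[
\int f(\hat\theta_y\nu)\,Q_{\PP}(\di\nu):=\int f(\hat\theta_y\nu)\,\PP_Q(\di\nu)=\int f(\nu)\,\PP_Q(\di\nu),
\]
which, upon taking $f=\mathds{1}_G$, is exactly the assertion $\PP_Q(\hat\theta_y^{-1}G)=\PP_Q(G)$ for all $G\in\mathfrak{B}(\mathcal{M})$. The engine of the argument is the time-change representation \eqref{eq:tcPalm}, applied not to $f$ itself but to the composition $f\circ\hat\theta_y$, combined with the \emph{flow (cocycle) property} $\hat\theta_x\circ\hat\theta_{x'}=\hat\theta_{x+x'}$ of the random shifts. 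The whole computation rests on two elementary transformation rules for deterministic shifts, which I would verify directly from the definition of the additive functional: namely $(\theta_s\nu)_t=\nu_{t-s}-\nu_{-s}$ and hence, for the right-continuous inverse, $(\theta_s\nu)^{-1}_x=s+\nu^{-1}_{x+\nu_{-s}}$. Together with the identity $\nu_{\nu^{-1}_x}=x$ guaranteed by \ref{eq:RCstar} and strict monotonicity, these rules yield the flow property by a short calculation; this is the measure-theoretic substitute for cycle-stationarity.

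Granting the flow property, \eqref{eq:tcPalm} (extended from indicators to bounded measurable integrands) gives
\[
\int f(\hat\theta_y\nu)\,\PP_Q(\di\nu)=\frac{1}{\lambda_Q}\int\int_0^{\nu_1} f(\hat\theta_y\hat\theta_{-x}\nu)\,\di x\,Q(\di\nu)=\frac{1}{\lambda_Q}\int\int_0^{\nu_1} f(\hat\theta_{y-x}\nu)\,\di x\,Q(\di\nu).
\]
Substituting $x\mapsto x+y$ in the inner (local-time) integral turns the window $[0,\nu_1]$ into $[-y,\nu_1-y]$, so the claim reduces to the single identity
\[
\int\int_{-y}^{\nu_1-y} F(x,\nu)\,\di x\,Q(\di\nu)=\int\int_{0}^{\nu_1} F(x,\nu)\,\di x\,Q(\di\nu),\qquad F(x,\nu):=f(\hat\theta_{-x}\nu).
\]
By additivity of the integral over the intervals involved (valid for the nonnegative integrand $F$ by Tonelli), the difference of the two sides equals $\int\bigl(\int_{-y}^{0}-\int_{\nu_1-y}^{\nu_1}\bigr)F(x,\nu)\,\di x\,Q(\di\nu)$ for $y\geq 0$, the case $y<0$ being symmetric. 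Thus it suffices to match the two boundary contributions.

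The heart of the matter, and the step I expect to be the main obstacle, is the boundary identity
\[
\int\int_{\nu_1-y}^{\nu_1} F(x,\nu)\,\di x\,Q(\di\nu)=\int\int_{-y}^{0} F(x,\nu)\,\di x\,Q(\di\nu),
\]
i.e. the statement that local time accrued over the window $(0,1]$ can be traded against the window immediately to the left of the origin. Here I would invoke the stationarity of $Q$ in the form $\int\Psi(\nu)\,Q(\di\nu)=\int\Psi(\theta_1\nu)\,Q(\di\nu)$ and substitute $\nu\mapsto\theta_1\nu$ on the left-hand side. Using $(\theta_1\nu)_1=-\nu_{-1}$ together with the shift formulae above, one simplifies
\[
\hat\theta_{-x}(\theta_1\nu)=\theta_{-(\theta_1\nu)^{-1}_x}(\theta_1\nu)=\theta_{-\nu^{-1}_{x+\nu_{-1}}}\nu=\hat\theta_{-(x+\nu_{-1})}\nu,
\]
so the integrand becomes $F(x+\nu_{-1},\nu)$ while the window $[\nu_1-y,\nu_1]$ attached to the shifted measure reads $[-\nu_{-1}-y,\,-\nu_{-1}]$; the final substitution $x\mapsto x-\nu_{-1}$ then produces exactly the window $[-y,0]$ with integrand $F(x,\nu)$, closing the identity. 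The delicate points throughout are maintaining consistent sign conventions for $\hat\theta$ and for $\nu^{-1}$ across these successive changes of variable, and justifying the measurability of $(x,\nu)\mapsto F(x,\nu)$ and the Fubini/Tonelli interchanges; once these are in place, the lemma follows from the equivalence recorded at the outset.
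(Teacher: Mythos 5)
Your proposal is correct and follows essentially the same route as the paper: both arguments start from the time-change representation \eqref{eq:tcPalm}, use the group (flow) property of the random shifts to reduce stationarity of $\PP_Q$ under $\hat\theta_y$ to comparing the local-time window $[0,\nu_1]$ with its translate, and then cancel the two boundary windows against each other via $\theta$-stationarity of $Q$ and the transformation rules $(\theta_s\nu)_t=\nu_{t-s}-\nu_{-s}$, $(\theta_s\nu)^{-1}_x=s+\nu^{-1}_{x+\nu_{-s}}$. The only differences are bookkeeping (you shift the window to $[-y,\nu_1-y]$ where the paper uses $[0,x]$ and $[\nu_1,\nu_1+x]$) and your welcome extra explicitness about the cocycle identity and the sign conventions for $\hat\theta$.
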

Before we give the proof, we briefly discuss the intuition behind Palm distributions in general and \cref{lem:dual} in particular. Palm distributions originated in queuing theory \cite{palm1943intensitatsschwankungen}. The concept is easiest understood for simple stationary point processes, which corresponds to $Q$ being the distribution of a random counting measures in our setting. In this case, the Palm distribution may be interpreted as a description of the distribution of the point process seen from a `typical point', an intuition which can be made precise using ergodic theory, see e.g. the discussion in \cite[Chapter 8]{thorisson2000coupling}. At the heart of Palm theory lies a duality principle \cite{thorisson1995time}, which can be paraphrased as\\
 
\noindent``A point process is stationary, if and only if its Palm version is stationary under point-shifts.''\\

\noindent For the purpose of this paper, the backward implication in this statement is not needed. We only rely on the observation, that a Palm distributed random measure is stationary w.r.t. to intrinsic shifts, i.e. shifts by mass points of its realisation, which is exactly what is expressed in \cref{lem:dual}.
\begin{proof}[Proof of {\cref{lem:dual}}]
Let $G\in\mathfrak{B}(\mathcal{M})$ and fix any $r>0$. Then the stationarity of $Q$ implies that, for any $y>0$,
\begin{align*}
\int \int^{\nu(0,r]+y}_{\nu(0,r]} \mathds{1}_G\circ \hat{\theta}_x \di x Q(\di \nu) \,&= \, \int \int^{\nu(0,r]+y}_{\nu(0,r]} \mathds{1}_G\circ \hat{\theta}_x \di x Q(\di \nu \circ \theta_{-r})\\
&= \, \int \int^{-\nu(r,0]+y}_{-\nu(r,0]} \mathds{1}_G\circ \hat{\theta}_{x-\nu(-r,0]} \di x Q(\di \nu )\\
&= \, \int \int^{y}_{0} \mathds{1}_G\circ \hat{\theta}_{x} \di x Q(\di \nu ),
\end{align*}
and an analogous calculation can be made for $r<0$. Thus we have that, using \eqref{eq:tcPalm},
\begin{align*}
\PP_Q(\hat{\theta}_x^{-1}G)\,&=\,\frac{1}{\lambda_Q}\int \int_{0}^{\nu_1} \mathds{1}_G\circ \hat{\theta}_{z+x}\di z  Q(\di \nu)\\
&=\,\frac{1}{\lambda_Q}\left(\int \int_{0}^{\nu_1} \mathds{1}_G\circ \hat{\theta}_{z}\di z  Q(\di \nu)-\int \int_{0}^{x} \mathds{1}_G\circ \hat{\theta}_{z}\di z  Q(\di \nu)\right.\\
&\phantom{=\;}+\left.\int \int_{\nu_1}^{\nu_1+x} \mathds{1}_G\circ \hat{\theta}_{z}\di z  Q(\di \nu)\right)\\
&=\,\frac{1}{\lambda_Q}\int \int_{0}^{\nu_1} \mathds{1}_G\circ \hat{\theta}_{z}\di z  Q(\di \nu) \,=\, \PP_Q(G).
\end{align*}
\end{proof}

\section{Bi-scale-invariance}\label{sec:stat}
So far, we have focussed our discussion of random measures on invariance with respect to time shifts only. Now we additionally consider scale-invariance of measures, which is the counterpart to self-similarity of processes. The essential observation of this section is that stationarity combined with scale-invariance of a quasi-distribution or Palm distribution determines the corresponding intensity measures entirely up to a multiplicative constant.

We illustrate this by means of marked point processes on the real line. We consider $(\Omega,\mathfrak{F},\PP)$, i.e. we work under a probability measure. An \emph{extended marked point process with positive marks (EMPP)} is a point process on $\mathds{R}\times (0,\infty)$
which is a.s. finite on all sets of the form $A\times M$ for bounded $A\in \mathfrak{B}(\mathds{R})$
and Borel sets
\[
M\subset \left(\epsilon,\frac{1}{\epsilon}\right), \; \text{ for some } \epsilon>0.
\]

Fix $\beta>0$ and $r\in(0,1)$. We define a rescaled point process by \[S^\beta_r N (A\times M):= N(rA \times r^\beta M),\]
where $c M:=\{cm, m\in M\}$ for any $c\in\mathds{R}\setminus\{0\}$, hence a contraction by factor $r$ in the time domain is combined with a contraction by $r^\beta$ in the mark space into the operator $S_r^\beta$. An EMPP $N$ on $\mathds{R}\times (0,\infty)$ is called \emph{$\beta$-bi-scale-invariant} if, for any $r\in(0,1)$,
\[
\PP\circ (S^\beta_rN)^{-1} = \PP\circ N^{-1}.
\]
Similarly, the EMPP is stationary, if its distribution is invariant under the shifts $(\theta_t)_{t\in\mathds{R}}$ applied to the time domain only. We now recall a well known result about point processes: if the intensity measure of the EMPP is finite, then it follows from bi-scale invariance together with stationarity that the intensity measure must be a product of a multiple of Lebesgue measure in time and a hyperbolic law on the marks. That this is the case can be seen by noting that stationarity implies homogeneity in time of the intensity measure, i.e. it must be a multiple of Lebesgue measure.  Additionally, bi-scale-invariance is transferred into stationarity on the mark space when mapped to logarithmic coordinates, hence the intensity on the marks is a multiple of Lebesgue mesasure in logarithmic coordinates which is translated into a hyperbolic law when reversing the coordinate transform.
\begin{prop}\label{prop:scaling}
	Let $N$ be a bi-scale invariant, stationary, extended\footnote{Instead of using the notion of an extended point process, one can also consider a classical locally finite point process if one equips the closure of the mark space with a metric which places $0$ infinitely far away from any positive mark.} marked point process on $\mathds{R}\times(0,\infty)$ with positive and locally finite intensity measure $\Lambda_N$. Then $\Lambda_N$ necessarily is of the form 
	\begin{equation}\label{eq:intensityscaling}
	\Lambda_N (\di t \times \di m)= cm^{-1-\nicefrac{1}{\beta}}\di t\di m.
	\end{equation}
\end{prop}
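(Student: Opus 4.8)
The plan is to work directly with the intensity measure $\Lambda_N(A\times M)=\EE N(A\times M)$ and to translate the two invariance hypotheses into functional equations for $\Lambda_N$. Taking expectations in the stationarity relation $\theta_t N \overset{d}{=} N$ yields $\Lambda_N((A+t)\times M)=\Lambda_N(A\times M)$ for all $t\in\mathds{R}$, while taking expectations in $S^\beta_r N\overset{d}{=} N$ gives $\Lambda_N(A\times M)=\Lambda_N(rA\times r^\beta M)$ for every $r\in(0,1)$. These two identities already contain all the required information, and the remainder of the argument is simply to solve them.

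First I would exploit stationarity to factor out the time variable. For each fixed Borel $M\subset(0,\infty)$ the set function $A\mapsto \Lambda_N(A\times M)$ is a translation-invariant, locally finite Borel measure on $\mathds{R}$, hence a multiple of Lebesgue measure by uniqueness of Haar measure. Writing $\mu(M):=\Lambda_N((0,1]\times M)$ we thus obtain the product form $\Lambda_N(\di t\times\di m)=\di t\,\mu(\di m)$. Substituting this into the bi-scale-invariance identity and cancelling the factor $|rA|=r|A|$ against $|A|$ leaves the pure scaling relation $\mu(r^\beta M)=r^{-1}\mu(M)$; setting $s=r^\beta$ this reads $\mu(sM)=s^{-1/\beta}\mu(M)$, first for $s\in(0,1)$ and then, by applying it to both $s$ and $1/s$, for every $s>0$.

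The heart of the matter is to deduce from this scaling relation that $\mu$ is the hyperbolic law $c\,m^{-1-1/\beta}\di m$, and here I would pass to logarithmic coordinates exactly as the heuristic preceding the statement suggests. Let $\Phi(m)=\log m$ and set $\tilde\mu:=\Phi_*\mu$. The multiplicative scaling $\mu(sM)=s^{-1/\beta}\mu(M)$ becomes the multiplicative quasi-invariance $\tilde\mu(B+a)=\e^{-a/\beta}\tilde\mu(B)$ under the shift $u\mapsto u+a$, with $a=\log s$. To turn quasi-invariance into genuine invariance I would reweight, defining $\rho(\di u):=\e^{u/\beta}\tilde\mu(\di u)$; a short computation then shows $\rho(B+a)=\rho(B)$ for all $a\in\mathds{R}$, so that $\rho$ is translation-invariant and therefore equals $c\cdot\mathrm{Leb}$ with $c\in(0,\infty)$ by Haar uniqueness, $c$ being finite and nonzero precisely because $\Lambda_N$ is assumed positive and locally finite. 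Unwinding the reweighting gives $\tilde\mu(\di u)=c\,\e^{-u/\beta}\di u$, and pulling this back through $u=\log m$ (so that $\di u=\di m/m$ and $\e^{-u/\beta}=m^{-1/\beta}$) yields $\mu(\di m)=c\,m^{-1-1/\beta}\di m$, which is exactly \eqref{eq:intensityscaling}.

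The step I expect to be the main obstacle is precisely this last passage, namely extracting the explicit density from the scaling relation \emph{without} presupposing that $\mu$ is absolutely continuous. The reweighting trick $\rho=\e^{u/\beta}\tilde\mu$ is what makes the argument rigorous, but it must be accompanied by a verification that $\rho$ is locally finite, since the uniqueness-of-Haar-measure principle applies only to locally finite translation-invariant measures; this is where the extended-point-process hypothesis and the local finiteness of $\Lambda_N$ enter in an essential way. Everything else — the disintegration in the time variable and the bookkeeping of the scaling exponents — is routine.
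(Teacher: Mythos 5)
Your proof is correct, and it reaches the conclusion by a route that differs in a worthwhile way from the one in the paper. The paper's (admittedly only sketched) argument applies the joint logarithmic change of coordinates $(t,m)\mapsto(\log t,\beta\log t-\log m)$ to exploit bi-scale-invariance \emph{first}; this forces a decomposition of $N$ into its restrictions to $t>0$ and $t<0$, requires the side assumptions $\Lambda_N(\{0\}\times(0,\infty))=0$ and absolute continuity of $\Lambda_N$ with respect to two-dimensional Lebesgue measure, and only afterwards invokes stationarity to identify the densities $\phi_\pm$. You reverse the order: stationarity plus Haar uniqueness immediately gives the product form $\Lambda_N(\di t\times\di m)=\di t\,\mu(\di m)$ (which in particular disposes of any possible mass on $\{0\}\times(0,\infty)$ for free), and the problem collapses to the one-dimensional scaling relation $\mu(sM)=s^{-1/\beta}\mu(M)$ on the mark space alone, so no splitting of the time axis is needed. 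Your exponential reweighting $\rho(\di u)=\e^{u/\beta}\tilde\mu(\di u)$ then \emph{derives} absolute continuity of $\mu$ from Haar uniqueness rather than assuming it, with local finiteness of $\rho$ correctly traced back to the extended-local-finiteness of $\Lambda_N$ on sets $A\times M$ with $M$ bounded away from $0$ and $\infty$; the extension of the scaling relation from $s\in(0,1)$ to all $s>0$ via $s\mapsto 1/s$ is also fine. The net effect is that your argument is more self-contained and slightly more general than the outline in the text, at the cost of not connecting to the Poisson-process computation in Daley--Vere-Jones that the paper leans on for the details.
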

\begin{proof}
We only give an outline of the precise argument. A similar derivation in more detail can be found in \cite[Chapter 12]{DVJII}, for the case of an extended marked Poisson process. Let $\Lambda_N$ denote the intensity measure of $N$. We assume that \[
\Lambda_N(\{0\}\times (0,\infty))=0,
\]
which holds if $N$ has almost surely no points at $0$ and that $\Lambda_N$ is absolutely continuous with respect to $2$-dimensional Lebesgue measure. We use a logarithmic change of coordinates, which makes it necessary to decompose $N$ into a marked point process $N_+$ on $(0,\infty)$ and a marked point process $N_-$ on $(-\infty,0)$ with associated intensity measures $\Lambda_\pm$. %\footnote{In fact, it is well known, see e.g. \cite{lamperti1962semi}, that a logarithmic change of coordinates induces a bijection between self-similar processes and stationary processes and this is the point-process version of this relation.}

Let us first consider $\Lambda_+$ only. By the logarithmic change of coordinates \[(t,m)\mapsto (\log t,\beta\log t-\log m),\quad t\in\mathds{R}, m\in(0,\infty),\] bi-scale-invariance is turned into shift-invariance and consequently under the new coordinates, $\Lambda_+$ must be a product of Lebesgue measure and some absolutely continuous measure $\rho_+$ on the (coordinate transformed) mark space, whenever the intensity in time is finite. In particular, reversing the coordinate transform, we obtain \[
\Lambda_+(\di t\times \di m) =\frac{ \phi_+({t^\beta}/{m})}{tm}\di t\di m
\]
for some locally integrable density $\phi_+$ of $\rho_+$ on $(0,\infty)$, see \cite[p. 258]{DVJII}\footnote{As noted above, the result there is for a Poisson process, however the statement transfers immediately to the general case. This is also discussed in \cite{DVJII} on pp. 260-262, however not in as much detail as the Poisson case.
% The general set up was first discussed in \cite{VJ2005}
}. A similar representation holds for $\Lambda_{-}$ with a density $\phi_{-}$. The additional assumption of stationarity in the time domain now implies that we must have
\[
\phi_+(m)=\phi_{-}(m)=cm^{-\nicefrac{1}{\beta}},
\]
for some $c>0$ and thus 
\eqref{eq:intensityscaling} must be satisfied.
\end{proof}
To apply this representation to random measures we recall that atomic random measures can be bijectively mapped to EMPPs. Let $\mathcal{N}$ be the space of locally finite extended marked point processes with positive marks $N$ on $(\mathds{R}\times(0,\infty))$ satisfying
\begin{equation}\label{eq:integrabilitycondition}
\int (m\wedge 1) \Lambda_N (A\times \di m)<\infty
\end{equation}
for any bounded Borel set $A$, and let $\mathcal{M}_a\subset\mathcal{M}$ denote the locally finite, purely atomic random measures on $\mathfrak{B}(\mathds{R})$.
\begin{lem}[{\cite[Lemma 9.1.VII]{DVJII}}]\label{lem:measuretopp}
There is a bijection mapping $\mathcal{N}$ onto $\mathcal{M}_a$.
\end{lem}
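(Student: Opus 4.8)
The plan is to exhibit the bijection explicitly as the map sending each atom of a measure to a point that records the atom's location together with its mass, and then to verify that the integrability condition \eqref{eq:integrabilitycondition} is precisely what makes this correspondence well-defined in both directions. Concretely, any $\nu\in\mathcal{M}_a$ admits a canonical representation $\nu=\sum_i m_i\delta_{t_i}$ with pairwise distinct atom locations $t_i$ and masses $m_i=\nu(\{t_i\})>0$, so I would define the forward map by $\Phi(\nu):=\sum_i\delta_{(t_i,m_i)}$, a counting measure on $\mathds{R}\times(0,\infty)$. In the reverse direction, given a configuration $N=\sum_i\delta_{(t_i,m_i)}$ I would set
\[
\Psi(N)(A):=\int_{A\times(0,\infty)} m\,N(\di t\times\di m)=\sum_{i:\,t_i\in A} m_i,\quad A\in\mathfrak{B}(\mathds{R}).
\]
It is immediate that $\Psi\circ\Phi$ and $\Phi\circ\Psi$ are the respective identities, so the substance of the argument lies in checking that each map lands in the target space.

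Next I would verify the two membership claims, which both reduce to the single identity relating local finiteness of $\nu$ to the integrability of $N$. Note first that \eqref{eq:integrabilitycondition}, upon taking expectations, is the intensity-level counterpart of the realisation-wise requirement $\int(m\wedge 1)\,N(A\times\di m)<\infty$, which is what the computation below uses. For bounded $A$ one has
\[
\int (m\wedge 1)\,\Phi(\nu)(A\times\di m)=\sum_{i:\,t_i\in A}(m_i\wedge 1)\leq \sum_{i:\,t_i\in A} m_i=\nu(A)<\infty,
\]
so $\Phi(\nu)$ satisfies the integrability condition; moreover the same computation shows that only finitely many points of $\Phi(\nu)$ carry a mark exceeding any fixed $\delta>0$, which is exactly the local finiteness demanded of an extended marked point process, so $\Phi(\nu)\in\mathcal{N}$. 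Conversely, if $N\in\mathcal{N}$ then integrability forces finitely many points with $m_i\geq 1$ inside $A$ together with $\sum_{t_i\in A,\,m_i<1}m_i<\infty$, whence $\Psi(N)(A)=\sum_{t_i\in A}m_i<\infty$; thus $\Psi(N)$ is locally finite, and it is purely atomic by construction, so $\Psi(N)\in\mathcal{M}_a$.

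Finally I would address measurability, since the bijection must be bimeasurable for the evaluation $\sigma$-fields on $\mathcal{M}_a$ and on the configuration space underlying $\mathcal{N}$. For $\Psi$ this is clear, as $N\mapsto\Psi(N)(A)$ is the integral of the measurable kernel $(t,m)\mapsto m\,\mathds{1}\{t\in A\}$ against $N$. The map $\Phi$ is the delicate one, since one must extract the atoms of a purely atomic measure measurably, and I expect this to be the main obstacle. I would handle it in the standard way: along a nested sequence of dyadic partitions $\{I_{k,n}\}$ of a bounded interval, the functionals $\sum_k \mathds{1}\{\nu(I_{k,n})>\delta\}$ and $\sum_k \nu(I_{k,n})\,\mathds{1}\{\nu(I_{k,n})>\delta\}$ are manifestly measurable in $\nu$ and, as $n\to\infty$, converge respectively to the number of atoms of mass exceeding $\delta$ and to their total mass (here pure atomicity is used to rule out any diffuse contribution). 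Letting $\delta\downarrow 0$ along the rationals then recovers the entire atom configuration measurably. As this is a well-documented construction, I would ultimately defer to \cite[Lemma 9.1.VII]{DVJII} for the complete measure-theoretic bookkeeping.
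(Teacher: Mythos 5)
Your construction is exactly the one the paper sketches in the paragraph following the lemma: atoms $(t_i,m_i)$ of $\nu$ become marked points, local finiteness of $\nu$ forces finiteness of $\Phi(\nu)$ on $A\times(\delta,\infty)$ (so the extended-MPP framework absorbs accumulation of small atoms), and the measure-theoretic details are deferred to \cite[Lemma 9.1.VII]{DVJII}. Your additional remarks on the converse direction, on reading the integrability condition realisation-wise, and on the measurability of the atom-extraction map are correct and consistent with the cited source, so the proposal matches the paper's approach.
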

In principle, the bijection of \cref{lem:measuretopp} just consists of interpreting, for given $\xi\in \mathcal{M}_a$, a point $x\in\mathsf{supp}(\xi)$ with $\xi(\{x\})=m>0$ as a pair $(x,m)\in \mathds{R}\times(0,\infty) $ and the collection of all such points forms a marked point process $N_{\xi}$. To deal with accumulation points of $\mathsf{supp}(\xi)$, one needs to consider extended MPPs. Note that if the intensity measure $\Lambda_\xi$  of some random measure $\xi$ under this bijection assigns infinite mass to a bounded open interval $A$, then by  local finiteness of $\xi$ this must be the consequence of infinitely many smaller and smaller atoms. Thus $N_{\xi}$ puts finite mass on any open set $A\times (\delta,\infty)$ for $\delta>0$ and is thus locally finite on $(0,\infty)$ in the extended sense. Since we wish to apply \cref{lem:measuretopp} to measures without fixed atoms, we observe that the bijection is preserved when restricted to the subspace of measures without fixed points and the subspace of without fixed points, respectively.

The version of $\beta$-bi-scale-invariance for quasi-distributions is just called $\beta$-scale-invariance, as defined \cref{sec:proofs} for probability distributions. Let us quickly recall this definition in the notation of this section. Let $\mathds{P}$ be a probability distribution on $\mathfrak{B}(\mathcal{M})$. $\PP$ is called $\beta$-{scale-invariant}, if for any $r\in(0,1)$, $B\in\mathfrak{B}(\mathds{R})$  we have 
\[
\PP (G) = \PP \left(\{R^{\beta}_r\nu, \nu\in G \}\right),\; G\in \mathfrak{B}(\mathcal{M})
\]
where
\[ \left(R^\beta_r \nu\right) (A):= r^{-\beta}\nu(rA),\] or short $\PP\circ (R^\beta_r)^{-1}=\PP$. When considering a non-finite quasi-distribution, one needs to add an additional factor rescaling the total mass: A stationary non-finite quasi-distribution $Q$ with finite intensity $\lambda_Q$ is called $\beta$-scale-invariant if \[
Q\circ \big(R^\beta_r\big)^{-1}=r^{\beta-1} Q.
\]
That this is the correct notion of scale-invariance for quasi-distributions can be seen by looking at their Palm distributions:
\begin{lem}[{\cite[Statement 2.3]{Z1}}]
A stationary, non-zero, non-finite quasi distribution $Q$ is $\beta$-scale-invariant if and only if its Palm distribution $\PP_Q$ is $\beta$-scale-invariant.
\end{lem}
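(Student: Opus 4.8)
The plan is to reduce both implications to a single intertwining identity between the Palm operator $Q\mapsto P_Q$ and the scaling map $R^\beta_r$, after which each direction is a short manipulation. First I would record how $R^\beta_r$ commutes with the shift group. Reading off the definitions $\theta_{-s}\nu(A)=\nu(A+s)$ and $R^\beta_r\nu(A)=r^{-\beta}\nu(rA)$ one finds, for every $\nu\in\mathcal{M}$ and every $A\in\mathfrak{B}(\mathds{R})$,
\[
\theta_{-s}\bigl(R^\beta_r\nu\bigr)(A)=r^{-\beta}\nu(rA+rs)=R^\beta_r\bigl(\theta_{-rs}\nu\bigr)(A),
\]
so that $\theta_{-s}\circ R^\beta_r=R^\beta_r\circ\theta_{-rs}$ as maps on $\mathcal{M}$. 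The same computation shows that $R^\beta_r$ sends Lebesgue measure to $r^{1-\beta}$ times Lebesgue measure; this power is the source of the normalising factor below.

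Next I would compute the Palm measure of the rescaled quasi-distribution $Q\circ(R^\beta_r)^{-1}$. Starting from the definition of $P_Q$ with window $[0,1]$ and substituting $\nu=R^\beta_r\mu$ in the outer integral against $Q$, the inner integral is taken against $(R^\beta_r\mu)(\di t)=r^{-\beta}\mu(r\,\di t)$; changing variables $t=s/r$ turns it into an integral over the rescaled window $r[0,1]$, while the commutation relation rewrites $\theta_{-s/r}(R^\beta_r\mu)=R^\beta_r(\theta_{-s}\mu)$, so the indicator becomes $\mathds{1}\{\theta_{-s}\mu\in(R^\beta_r)^{-1}G\}$. Invoking the independence of the Palm measure of the chosen window (\cref{lem:PDA}), now with the window $r[0,1]$, identifies the remaining integral as $|r[0,1]|\,P_Q((R^\beta_r)^{-1}G)$. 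Collecting the factors of $r$ yields the master identity
\[
P_{Q\circ(R^\beta_r)^{-1}}=r^{1-\beta}\,\bigl(P_Q\circ(R^\beta_r)^{-1}\bigr),
\]
valid for every stationary non-zero $Q$ with $\lambda_Q<\infty$. A check on total masses, both sides equalling $r^{1-\beta}\lambda_Q$, guards against an error in the exponent bookkeeping.

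With the master identity in hand, both implications follow using only the linearity $P_{cQ}=cP_Q$ and the relation $\PP_Q=P_Q/\lambda_Q$; crucially, the normalising factor built into the definition of $\beta$-scale-invariance of $Q$ is exactly the $r^{1-\beta}$ produced by the Palm map, which is what makes the argument close. If $Q$ is $\beta$-scale-invariant, substituting $Q\circ(R^\beta_r)^{-1}=r^{1-\beta}Q$ into the left-hand side and cancelling the common factor $r^{1-\beta}$ gives $P_Q\circ(R^\beta_r)^{-1}=P_Q$, hence $\PP_Q\circ(R^\beta_r)^{-1}=\PP_Q$. Conversely, if $\PP_Q$ is $\beta$-scale-invariant then $P_Q\circ(R^\beta_r)^{-1}=P_Q$, and the master identity shows that the two quasi-distributions $Q\circ(R^\beta_r)^{-1}$ and $r^{1-\beta}Q$ share the Palm measure $r^{1-\beta}P_Q$. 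Both are stationary --- for $Q\circ(R^\beta_r)^{-1}$ this follows from the commutation relation together with the stationarity of $Q$ --- so injectivity of the Palm correspondence forces $Q\circ(R^\beta_r)^{-1}=r^{1-\beta}Q$, i.e. $Q$ is $\beta$-scale-invariant.

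I expect the main obstacle to be twofold. The derivation of the master identity demands careful tracking of the Jacobian powers arising from the space rescaling; it is here that the precise factor $r^{1-\beta}$ is pinned down, and any slip would break the cancellations in both directions. The backward implication, moreover, is the only step that leaves elementary computation behind: it rests on the fact that a stationary quasi-distribution is uniquely recovered from its Palm measure via the inversion formula for random measures, which is the abstract input one cannot avoid.
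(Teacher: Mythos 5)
The paper does not actually prove this lemma --- it is imported verbatim from Z\"ahle \cite[Statement 2.3]{Z1} --- so your proposal is being compared against a citation rather than an argument. Your derivation is sound and is essentially the standard proof: the commutation relation $\theta_{-s}\circ R^\beta_r=R^\beta_r\circ\theta_{-rs}$ is correct, and pushing it through the defining integral of $P_Q$ with window $[0,1]$, then invoking window-independence (\cref{lem:PDA}) for the window $[0,r]$, does yield
\[
P_{Q\circ(R^\beta_r)^{-1}}=r^{1-\beta}\,\bigl(P_Q\circ(R^\beta_r)^{-1}\bigr),
\]
from which both implications follow as you describe. One discrepancy deserves flagging: the paper defines $\beta$-scale-invariance of $Q$ by $Q\circ(R^\beta_r)^{-1}=r^{\beta-1}Q$, whereas you use $r^{1-\beta}$. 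Your exponent is the consistent one --- the intensity computation $\lambda_{Q\circ(R^\beta_r)^{-1}}=r^{-\beta}\Lambda_Q(r\,\cdot)(0,1]=r^{1-\beta}\lambda_Q$ forces the normalising constant to be $r^{1-\beta}$, and one can confirm it on the concrete example $Q_\ell(G)=\EE\int\mathds{1}\{\ell^y\in G\}\di y$ for Brownian local time --- so the paper's $r^{\beta-1}$ appears to be a sign slip in the exponent; with the paper's normalisation taken literally your cancellation would fail, producing $P_Q\circ(R^\beta_r)^{-1}=r^{2(\beta-1)}P_Q$ instead of invariance.

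The only genuine external input is the one you correctly isolate: the converse direction needs that a stationary, non-zero quasi-distribution with finite intensity is uniquely determined by its Palm measure. This is Mecke's inversion theorem for stationary random measures and is not supplied by \cref{lem:PDA} or \cref{lem:dual}; it is standard (see \cite{mecke1975invarianzeigens,heveling2005characterization}, cited elsewhere in the paper), but in a fully self-contained write-up you would either quote it explicitly or note that only the forward implication is actually used in the proof of \cref{prop:powerlaw}, so the harder direction could be dispensed with entirely for the purposes of this paper.
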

%A proof of this statement is provided in \cref{sec:auxproofs}.
\section{Local time as a Palm distribution}\label{sec:LTasPalm}
Z\"ahle observed in \cite{Z1} that scale-invariance of random measures can be based on a notion of scaling around a {typical point} of mass of the measure, i.e. by scale-invariance of Palm distributions. Clearly, the local time of a centered self-similar process is always distributionally scale-invariant in the usual sense, i.e. when scaling is performed w.r.t. to the origin. To fit local time and other fractal measures derived from a $H$-sssi process $X$ into the framework developed in \cite{Z1,Z2} one has to express them as Palm distributions. This is done in \cite{Z3}. We do not require the main results of this work which are concerned with the carrying Hausdorff dimension of the realisations of random measures derived from $X$, but remark in passing, that they can be useful for verifying assumption \ref{eq:singularity}, cf. the proofs of \cref{thm:persistence2,thm:persistence3}. In view of the previous two sections, we only need to know that the local time of an $H$-sssi process can be viewed as a Palm distribution. The precise result, in the notation introduced in \cref{sec:stat} is as follows:
\begin{prop}[{\cite[Proposition 6.9.]{Z3}}]\label{prop:LTisPALM}
If $X$ is $H$-sssi and \textsf{LT}, then $\ell$ is an $(1-H)$-scale-invariant, Palm-distributed random measure.
\end{prop}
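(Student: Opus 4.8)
The plan is to realise $\ell=\ell^{0}$ as the Palm distribution of a stationary quasi-distribution $Q$ assembled from the occupation density of $X$ at \emph{all} levels, and then to read off both the Palm property and scale-invariance from the invariances of $X$. Write $\ell^{y}(Y)$ for the level-$y$ local time of a generic trajectory $Y$, so that $\ell^{y}=\ell^{y}(X)$ and $\ell=\ell^{0}(X)$, and abbreviate $X_{t+\cdot}:=(X_{t+s})_{s\in\mathds{R}}$, $X^{(t)}:=(X_{t+s}-X_{t})_{s\in\mathds{R}}$ and $X_{r\cdot}:=(X_{rs})_{s\in\mathds{R}}$. Define a quasi-distribution $Q$ on $\mathfrak{B}(\mathcal{M})$ by
\[
E_{Q}[f]=\EE\Bigl[\int_{\mathds{R}}f\bigl(\ell^{y}\bigr)\,\di y\Bigr],\qquad f\ge 0\text{ measurable on }\mathcal{M}.
\]
Taking $f(\nu)=\nu(0,1]$ and combining the disintegration \eqref{eq:loctimedef} with \eqref{eq:occup} gives the finite intensity $\lambda_{Q}=\Lambda_{Q}(0,1]=\EE[\psi((0,1]\times\mathds{R})]=1$. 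The zero measure $o$, which arises from levels outside the range of $X$, contributes nothing to the intensity nor (since $o(\di t)=0$) to the Palm measure below, so after discarding it $Q$ is a non-zero, $\sigma$-finite, stationary quasi-distribution with $\lambda_{Q}=1$; $\sigma$-finiteness follows by decomposing $\mathcal{M}\setminus\{o\}$ along the sets $\{\nu:\nu(-n,n]>1/k\}$, each of finite $Q$-mass by Markov's inequality.

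First I would check that $Q$ is stationary, which is where stationarity of increments enters. Two elementary bookkeeping identities for occupation densities, both immediate from \eqref{eq:occup}, are the time-shift relation $\theta_{-t}\ell^{y}(X)=\ell^{y}(X_{t+\cdot})$ and the level-shift relation $\ell^{y}(Y-c)=\ell^{y+c}(Y)$. Using the first, then the second with $Y=X^{(t)}$ and $c=-X_{t}$, and finally $X^{(t)}\overset{d}{=}X$, one computes for fixed $t$
\[
E_{Q}[f\circ\theta_{-t}]=\EE\Bigl[\int_{\mathds{R}}f\bigl(\ell^{y}(X_{t+\cdot})\bigr)\di y\Bigr]=\EE\Bigl[\int_{\mathds{R}}f\bigl(\ell^{y-X_{t}}(X^{(t)})\bigr)\di y\Bigr]=\EE\Bigl[\int_{\mathds{R}}f\bigl(\ell^{y'}(X^{(t)})\bigr)\di y'\Bigr]=E_{Q}[f],
\]
the third equality being the Lebesgue-invariant substitution $y'=y-X_{t}$ carried out for each fixed realisation. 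Hence $Q\circ\theta_{-t}^{-1}=Q$, i.e.\ $Q$ is stationary.

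The key and hardest step is to identify the Palm distribution $\PP_{Q}$ with the law of $\ell$. Taking $A=[0,1]$ in the definition of $P_{Q}$ (permissible by \cref{lem:PDA}) and inserting $Q$ yields
\[
P_{Q}(G)=\EE\Bigl[\int_{\mathds{R}}\int_{[0,1]}\mathds{1}\{\theta_{-t}\ell^{y}\in G\}\,\ell^{y}(\di t)\,\di y\Bigr].
\]
Here the two invariances conspire through a ``restart at a typical level point''. The same two bookkeeping identities give $\theta_{-t}\ell^{X_{t}}(X)=\ell^{0}(X^{(t)})$, that is, at a point $t$ with $X_{t}=y$ the shifted measure $\theta_{-t}\ell^{y}$ is exactly the level-$0$ local time of the restarted process $X^{(t)}$. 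The decisive simplification is that, pathwise, the double integral $\int_{\mathds{R}}\int_{[0,1]}h(t,y)\,\ell^{y}(\di t)\,\di y$ collapses, by the occupation-density disintegration $\psi(\di t\times\di y)=\ell^{y}(\di t)\,\di y$ together with \eqref{eq:occup}, into the single time integral $\int_{[0,1]}h(t,X_{t})\,\di t$; applying this with $h(t,y)=\mathds{1}\{\theta_{-t}\ell^{y}\in G\}$ pins $y=X_{t}$ and produces
\[
P_{Q}(G)=\EE\Bigl[\int_{0}^{1}\mathds{1}\{\ell^{0}(X^{(t)})\in G\}\,\di t\Bigr]=\int_{0}^{1}\PP\bigl(\ell^{0}(X^{(t)})\in G\bigr)\,\di t=\PP(\ell\in G),
\]
since $X^{(t)}\overset{d}{=}X$ forces $\ell^{0}(X^{(t)})\overset{d}{=}\ell$ for every $t$. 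With $\lambda_{Q}=1$ this gives $\PP_{Q}=P_{Q}=\PP(\ell\in\cdot)$, so $\ell$ is Palm-distributed, and the a.s.\ support properties required of a quasi-distribution transfer from the corresponding a.s.\ properties of $\ell$. I expect the measure-theoretic justifications concentrated in this step — joint measurability of the whole local-time field as a functional of the path, the interchange of expectation with the level integral, and the pathwise disintegration — to be the main obstacle, everything else being bookkeeping.

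Finally, $(1-H)$-scale-invariance is a direct consequence of self-similarity. With $\beta=1-H$, the scaling identities $\ell^{y}(X)(rA)=r\,\ell^{y}(X_{r\cdot})(A)$ and $\ell^{y}(r^{H}X)=r^{-H}\ell^{r^{-H}y}(X)$ (both read off from \eqref{eq:occup}) combine with $X_{r\cdot}\overset{d}{=}r^{H}X$ to give, for $r\in(0,1)$ and using the second identity at $y=0$,
\[
R^{\beta}_{r}\ell=r^{-\beta}\ell(r\,\cdot)=r^{H}\,\ell^{0}(X_{r\cdot})\overset{d}{=}r^{H}\,\ell^{0}(r^{H}X)=\ell^{0}(X)=\ell,
\]
so $\PP_{Q}\circ(R^{\beta}_{r})^{-1}=\PP_{Q}$; equivalently $Q$ itself is $\beta$-scale-invariant, the two statements being interchangeable by \cite[Statement 2.3]{Z1}. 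This shows that $\ell$ is an $(1-H)$-scale-invariant, Palm-distributed random measure, as asserted.
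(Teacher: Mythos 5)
Your proof is correct and follows essentially the construction that the paper itself only sketches in \cref{sec:LTasPalm} (the statement being cited from Z\"ahle rather than proved): your $Q$ is exactly the paper's $Q_\ell$, stationarity comes from the time-shift/level-shift bookkeeping plus stationary increments, and the Palm identification collapses via the occupation-density disintegration, so there is nothing to object to in the substance. The one point worth flagging is that the expression you arrive at, $P_Q(G)=\int_0^1\PP\bigl(\theta_{-t}\ell^{X_t}\in G\bigr)\,\di t$, is precisely the averaged version $\tilde\PP_\ell$ of \cref{rmk:1}, which is how the possible non-existence of $\ell^0$ (the case $0\in\mathcal R$) is circumvented; equating it with $\PP(\ell\in\cdot)$ needs the additional observation, recorded in \cite[p.~132]{Z3} and in \cref{rmk:1}, that $\tilde\PP_\ell$ and $\PP_\ell$ coincide whenever the latter is defined.
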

We may rephrase the statement of \cref{prop:LTisPALM} as a statement about $\PP_{\ell}$, the distribution of the local time as a random measure: There exists some quasi-distribution $Q_{\ell}$, such that $\PP_{\ell}$ is the Palm-distribution of $Q_{\ell}$.

\begin{rmk}\label{rmk:1} As mentioned in \cref{sec:results1}, it cannot be a priori excluded that $0\in\mathcal{R}$ and thus $\ell=\ell^0=\ell^{X(0)}$ is not defined. Note that in \cite{Z3}, this is avoided by working with the averaged version
\[
\tilde{\PP}_{\ell}(\cdot)={\int_{0}^1 \PP_{\ell,t}(\cdot)\di t},
\]
where $\PP_{\ell,t}(\cdot)$ denotes the distribution of $\theta_{-t}\ell^{X_t}, t\in\mathds{R}$. This trick can always be used in the stationary increment case, but it has the drawback that this version of the local time does not have the usual path-wise interpretation. %The averaging is performed to the avoid possibility that in \cref{eq:LTpathwise}, $y=X_0=0$ belongs to the exceptional set for which the local time cannot be canonically defined from a family of occupation measure without referring to the process $X$.
It is, however, easily seen that $\tilde{\PP}_{\ell}$ and $\PP_{\ell}$ have the same distribution as random measures, if the latter is defined, see the discussion in \cite[p. 132]{Z3}. We remark that the use of $\tilde{\PP}_{\ell}$ is obsolete, if $X$ a.s. has continuous paths. %This shows that our analysis can be carried over to more general situations, albeit at the price of losing the path-wise interpretation of the local time distribution.
\end{rmk}
Aside from the technical issue of \cref{rmk:1}, we can give a path-wise interpretation to the quasi-distribution $Q_\ell$. Let us set \[
Q_{\ell}(G)=\EE \int \mathds{1}\{\ell^y(\cdot)\in G  \}\di y, \quad G\in\mathfrak{B}(\mathcal{M}), o\notin G,
\]
recalling that $o$ is the trivial measure. We can think of $Q_{\ell}(G)$ as the `local time at the origin' of a trajectory in the \emph{flow} of $X$, i.e. the quasi-distribution $Q_X$ on trajectories obtained via
\[
Q_X({H})=\int \mathds{1}\{ X+y\in{H}   \} \di y,\; {H}\in \mathfrak{Z}(\mathcal{C}),
\]
where $\mathcal{C}$ is a suitable path space equipped with the $\sigma$-field $\mathfrak{Z}(\cdot)$ of cylinder sets. $Q_X$ can be thought of as mixture of the law of $X$ w.r.t. Lebesgue measure in the `origin' $X(0)$ to obtain a stationary measure on paths. From $Q_X$ we can derive a stationary version of the occupation measure and then disintegrate to obtain $Q_\ell$. Note however, that $Q_{\ell}$ and $\PP_{\ell}$ need not be interpreted in this way -- it is only necessary that the \emph{distribution} of local time as a random measure is a Palm distribution.\\ 

Finally, we are in the position to prove {\cref{prop:powerlaw}} and thus conclude the proof of \cref{thm:persistence1}.
\begin{proof}[Proof of {\cref{prop:powerlaw}}]
	The representation \eqref{eq:powerlawrep} follows immediately from \cref{prop:scaling} upon showing $\nicefrac{1}{1-H}$-bi-scale-invariance of $\hat{N}$. This is, in turn, equivalent to $\nicefrac{1}{1-H}$-scale-invariance and stationarity of $\hat{\ell}$. The scale-invariance has already been established in the opening paragraph of \cref{sec:proofs}. We now show that $L$ has stationary increments and thus $\hat{\ell}$ is a stationary random measure. By \cref{prop:LTisPALM}, $\ell$ is Palm-distributed. For $x\in \mathds{R}$ set $t(x):=\inf\{t: \ell(t)>x\}=L_x$. Fix $x_0\in \mathds{R}$ and consider a finite family of points $x_1,\dots,x_n$ with $x_0<x_1<\dots <x_n$ and the corresponding random times $t(x_i), i=1,\dots,n.$ Almost surely, $\{t(x_i), i=1,\dots,n\}\subset\mathsf{supp}(\ell)$ and because $\ell$ is Palm-distributed, we may apply \cref{lem:dual}, to obtain \[
	\big(t(x_i)-t(x_0)\big)_{i=1}^n\overset{d}{=} \big(t(x_i-x_0)-t(0)\big)_{i=1}^n=\big(L_{x_i-x_0}\big)_{i=1}^n
	\]
	and thus $L$ has stationary increments. Consequently, $\hat{\ell}$ is $\nicefrac{1}{1-H}$-scale-invariant and stationary and this concludes the proof of the \cref{prop:powerlaw}.
\end{proof}

%The version of \cref{prop:LTisPALM} given in \cite{Z3} is more general and so is the proof, of which we give a reduced version suited to our setting.
%\begin{proof}
%\end{proof}

\section{Related work and historical remarks}\label{sec:concl}
% EXTENSIONS TO HIGHER DIMENSIONS
%The drawback of Z\"ahles construction is that $\mathds{P}_{\bar{\ell}}$ allows no path-wise interpretation of the local time as an associated quantity to a realisation of $X$. However the a.s. properties of random local time measure constructed by Z\"ahle and the `path-wise' local time measure coincide, see the discussion in \cite[p. 132]{Z3}. Thus one does not need continuity of the local time in space. 

% SIMILAR IDEAS
As already noted, the key part of our approach is Z\"ahle's construction of the local time as a Palm distribution. His main goal was to show that scale-invariance of random measures already determines their carrying Hausdorff dimension. He did however already notice the relation between index of self-similarity and tails of the `gap lengths' in fractal sets, see \cite[Theorem 5.3]{Z1} and even mentioned the zero set of Brownian motion as an explicit example. However, he did not pursue this investigation further in the more general set up of \cite{Z2,Z3}. In fact \cite[Theorem 5.3]{Z1} prompted the present author's investigation of Palm distributions as a means to derive persistence exponents from invariance properties and be regarded as a precursor of \cref{prop:powerlaw}. One great advantage of Z\"ahle's theory is that it works in arbitrary space and time dimension, and it would certainly be fruitful to try and extend the discussion presented here to higher dimensions.\\
%The author of the present article also believes, that the viewpoint taken here might be fruitful for discrete processes and for stationary processes.\\

The `Palm duality' mentioned in \cref{sec:palmdual} is actually also true in the context of random measures. The invariance relation expressed in \cref{lem:dual} holds in a more general setting \cite{mecke1975invarianzeigens}. A converse statement of equal generality was proved in \cite{heveling2005characterization}, where it is also shown that Palm distributions are characterised by the duality principle even in the random measure setting. The random time change approach we have chosen is taken from \cite{geman1973remarks}. In fact, its importance for studying local times was already pointed out there. Earlier uses of the same concept can be found in \cite{totoki1966time,maruyama}.\\

%There seems to be some confusion in the literature about whether or not the zero set of FBM has the same distribution as the image of a stable process. In \cite[p. 80]{plotnick1995fractal} the view is held that this is the case and the result is attributed to Mandelbrot \cite{mandelbrot1983fractal}. In \cite[p.2169]{Muk1} this is formulated as a conjecture. As discussed in \cref{sec:results1} it seems unlikely that the inverse local time of FBM actually is a stable process, due to the lack of independence of the excursions from $0$, but there seems to be no proof in the literature. The excursions may, however, inherit some form of infinitesimal independence, which holds for the underlying fractional Brownian motion \cite{norros2009local}, this is supported by the fact that the zero set of FBM has maximal Fourier dimension, i.e. is a Salem set \cite{Mukeru2018}. Conversely, it seems plausible, that the stable subordinator of index $\alpha\in(0,1)$ can be obtained as the inverse local time process of \emph{some} $1-\alpha$-sssi process.

Generalising the EMPP representation from completely random measures to stationary random measures was first suggested by Vere-Jones in \cite{VJ2005}, see also \cite{DVJII}. It is also conjectured there, that it might prove fruitful to pursue this generalisation in the setting of quasi-distributions introduced in \cite{Z1,Z2,Z3} and this view was justified as the results of this article illustrate.\\
%
%Our approach should also be fruitful in higher dimensions (time and space) but the duality is not as straightforward there so we plan to address this separately in a future publication, furthermore, the main ingredient is Palm theory which works in complete generality, hence the approach is extremely robust.
%

\textbf{Acknowledgement.} The author would like to express his gratitude to Frank Aurzada for introducing him to the subject of persistence probabilities, discussing with him many aspects of the present work and offering helpful comments on the manuscript. Furthermore, he wishes to thank Safari Mukeru for an interesting exchange on the subject of inverse local times of fractional Brownian motion.
\appendix
\section{Some auxiliary statements used in the text}\label{sec:auxproofs}
\begin{lem}\label{lem:equivalentconditions}
Let $\ell$ be a non-zero local time of an $H$-self-similar process satisfying \ref{eq:continuity}, then 
\[
\{ \ell_t, t\in \mathds{R}  \} =\mathds{R}.
\] 
\end{lem}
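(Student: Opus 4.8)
The plan is to reduce the statement to showing that both half-line masses $\ell(0,\infty)$ and $\ell(-\infty,0]$ are almost surely infinite, to obtain a $\{0,\infty\}$-dichotomy for each of them from self-similarity, and finally to exclude the value $0$ using the non-degeneracy of $\ell$ together with the invariance of $X$.

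First I would record the reduction. Since $(\ell_t)_{t\in\mathds{R}}$ is non-decreasing and right-continuous with $\ell_0=0$, and since by the atomlessness assumption \ref{eq:continuity} it has no jumps and is therefore continuous, its image is a connected subset of $\mathds{R}$, i.e. an interval with no gaps. Its endpoints are $\lim_{t\to+\infty}\ell_t=\ell(0,\infty)$ and $\lim_{t\to-\infty}\ell_t=-\ell(-\infty,0]$. Hence $\{\ell_t,t\in\mathds{R}\}=\mathds{R}$ holds if and only if $\ell(0,\infty)=\infty$ and $\ell(-\infty,0]=\infty$ almost surely, and it is these two facts that I would establish.

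Next I would exploit the $(1-H)$-self-similarity of $\ell$, that is $\ell(0,T]\overset{d}{=}T^{1-H}\ell(0,1]$, to prove $\ell(0,\infty)\in\{0,\infty\}$ a.s. For fixed $K\in(0,\infty)$ the events $\{\ell(0,T]\le K\}$ decrease, as $T\uparrow\infty$, to $\{\ell(0,\infty)\le K\}$, so $\PP(\ell(0,\infty)\le K)=\lim_{T\to\infty}\PP(\ell(0,1]\le KT^{-(1-H)})=\PP(\ell(0,1]=0)$, a quantity independent of $K$; letting $K\downarrow 0$ and $K\uparrow\infty$ then forces $\PP(0<\ell(0,\infty)<\infty)=0$. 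The same argument applies verbatim to $\ell(-\infty,0]$. Combining this dichotomy with non-zeroness, which gives $\ell(\mathds{R})=\ell(-\infty,0]+\ell(0,\infty)>0$ a.s. (the point $0$ is $\ell$-null by \ref{eq:continuity}), and noting that neither summand may be positive and finite, I conclude $\ell(\mathds{R})=\infty$ a.s., so that \emph{at least one} of the two half-line masses is infinite.

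The hard part will be upgrading ``at least one'' to ``both'', and here self-similarity alone is genuinely insufficient: the rescaling $A\mapsto rA$ preserves the sign of the time axis, so nothing in it prevents all the mass from concentrating on a single half-line. I would resolve this using the stationary increments of $X$, and ultimately its recurrence. The time reversal $t\mapsto -t$ sends the zero set of $X$, and thus $\mathsf{supp}(\ell)$, to that of the sssi process $t\mapsto -X_{-t}$, which produces the symmetry $\ell(-\infty,0]\overset{d}{=}\ell(0,\infty)$ and in particular $\PP(\ell(0,\infty)=0)=\PP(\ell(-\infty,0]=0)$. To actually kill this common probability I would invoke recurrence, namely $\limsup_{t\to\pm\infty}X_t=+\infty$ and $\liminf_{t\to\pm\infty}X_t=-\infty$ a.s., so that by path continuity $X$ returns to $0$ at arbitrarily large positive and negative times and accumulates local time on each half-line, forcing $\ell(0,\infty)=\ell(-\infty,0]=\infty$ a.s. The delicate point to check carefully is exactly this symmetry/recurrence step, since it is the only place where a property of $X$ beyond self-similarity is used.
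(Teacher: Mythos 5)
Your reduction (continuity of $(\ell_t)$ from \ref{eq:continuity} makes the range an interval with endpoints $-\ell(-\infty,0]$ and $\ell(0,\infty)$) and your scaling dichotomy ($\PP(\ell(0,\infty)\leq K)=\lim_T\PP(\ell(0,1]\leq KT^{-(1-H)})=\PP(\ell(0,1]=0)$ for every $K$, hence $\ell(0,\infty)\in\{0,\infty\}$ a.s., and likewise on the left) are correct and are essentially the paper's own argument: the appendix proof also assumes the range is confined to $[0,K]$ with probability $>\delta$, uses $(1-H)$-self-similarity of $(\ell_t)$ to force $\PP(\ell(0,b]=0)\geq\delta$ for every $b$, derives a contradiction with non-zeroness, and then uses \ref{eq:continuity} to pass from an unbounded range to range $=\mathds{R}$. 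Up to the statement that not both half-line masses can vanish, your write-up is fine.

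The gap is in your final step. You close the argument by invoking a.s.\ path continuity and recurrence in the sense $\limsup_{t\to\pm\infty}X_t=+\infty$, $\liminf_{t\to\pm\infty}X_t=-\infty$; neither is a hypothesis of \cref{lem:equivalentconditions} ($X$ is only continuous in probability, and recurrence, though mentioned in the abstract, is not among the stated assumptions of the lemma or of \cref{thm:persistence1}). Even granting both, the inference ``$X$ hits $0$ at arbitrarily large times, hence accumulates local time on each half-line'' is invalid: $\mathsf{supp}(\ell)\subseteq\mathcal{Z}_X$ but the inclusion may be strict, so zeros at large times need not carry any local time. The time-reversal symmetry $\ell(-\infty,0]\overset{d}{=}\ell(0,\infty)$ is also unjustified as stated — that $t\mapsto -X_{-t}$ is again $H$-sssi does not make it equal in law to $X$, and in general $\ell(-T,0]$ is distributed as the local time of $X$ on $(0,T]$ at the random level $X_T$, not at $0$; besides, a distributional symmetry between the two half-line masses would not by itself exclude the event that exactly one of them vanishes. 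You are right that the two-sidedness is the delicate point (the paper dispatches it rather tersely with ``a similar argument works for negative $b$''), but the intended resolution stays inside the hypotheses: the event that the range of $(\ell_t)_{t\in\mathds{R}}$ lies in $[0,K]$ already forces $\ell$ to vanish on $(-\infty,0]$, and applying the scaling argument to the two-sided process on this joint event bounds its probability by $\PP(\ell=o)=0$; the mirror-image event is handled symmetrically. If you want a cleaner route to ``both sides are charged'', the natural tool is the one the paper builds later anyway — the stationary quasi-distribution $Q_\ell$ behind the Palm representation a.s.\ charges every half-line, and this shift-invariant property passes to the Palm version — but that machinery is set up after this lemma and should not be presupposed here.
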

\begin{proof}
From the path-wise representation of $\ell$, it follows immediately that $(\ell_t)_{t\geq 0}$ is an $1-H$-self-similar process. Assume that $\PP(\ell_t\in [0,K] \text{ for all } t)>\delta>0$ for some $K<\infty,\delta>0$. Fix $b>0$, then we also have $\PP(\ell_{Tb}\in [0,K] )>\delta$ for any $T$ and by self-similarity, this means $\PP(\ell_b\in [0,KT^{-(1-H)}])>\delta$ for any $T>0$. Thus $\ell(0,b]=0$ with probability exceeding $\delta$. Since $b$ was arbitrary, this means $\ell_t\equiv 0$ on $[0,\infty)$. A similar argument works for negative $b$ and we obtain a contradiction to $\ell$ being non-zero. Hence the range of $(\ell_t)_{t\geq 0}$ is a.s. not bounded on either side. By \ref{eq:continuity}, i.e. continuity of $\ell_t$, the range must cover all of $\mathds{R}$.
\end{proof}
\begin{lem}\label{lem:PDA} 
The definition of the Palm measure $P_Q$ of $Q$ does not depend on the choice of the set $A$. 
\end{lem}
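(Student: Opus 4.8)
The plan is to fix $G\in\mathfrak{B}(\mathcal{M})$ and regard the (unnormalised) right-hand side of the definition as a set function in $A$. Concretely, for $A\in\mathfrak{B}(\mathds{R})$ set
\[
\mu_G(A):=\int\int_A \mathds{1}\{\theta_{-t}\nu\in G\}\,\nu(\di t)\,Q(\di\nu).
\]
Since the integrand is nonnegative, countable additivity of $\nu$ in the variable $t$ together with Tonelli's theorem shows that $\mu_G$ is a (possibly infinite) measure on $\mathfrak{B}(\mathds{R})$. With this notation the lemma is equivalent to the assertion that $\mu_G(A)$ is a fixed multiple of the Lebesgue measure of $A$, the multiple being exactly $P_Q(G)$; dividing by $\int_A\di s$ then removes the dependence on $A$.

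The key step is to show that $\mu_G$ is invariant under the translations of $\mathds{R}$, and here I would use the stationarity of $Q$. For fixed $s\in\mathds{R}$ consider the functional $F(\nu)=\int_A \mathds{1}\{\theta_{-t}\nu\in G\}\,\nu(\di t)$. Applying the shift $\theta_s$ and using the mass-transport identity $\int f\,\di(\theta_s\nu)=\int f(\,\cdot+s)\,\di\nu$ together with the group relation $\theta_{-t}\theta_s=\theta_{s-t}$, the substitution $t=u+s$ gives
\[
F(\theta_s\nu)=\int \mathds{1}\{u\in A-s\}\,\mathds{1}\{\theta_{-u}\nu\in G\}\,\nu(\di u).
\]
Integrating against $Q$ and invoking stationarity in the form $\int F(\theta_s\nu)\,Q(\di\nu)=\int F(\nu)\,Q(\di\nu)$ yields $\mu_G(A-s)=\mu_G(A)$ for every $s\in\mathds{R}$, so that $\mu_G$ is translation invariant.

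It then remains to identify a translation-invariant Borel measure on $\mathds{R}$ as a multiple of Lebesgue measure. Writing $\lambda_G:=\mu_G(0,1]$, additivity over unit intervals gives $\mu_G(0,n]=n\lambda_G$, and translation invariance propagates this to $\mu_G(a,b]=(b-a)\lambda_G$ first for rational and then, by continuity of measures, for all real $a<b$. As the half-open intervals form a $\pi$-system generating $\mathfrak{B}(\mathds{R})$, the uniqueness theorem for measures gives $\mu_G(\di s)=\lambda_G\,\di s$ on all Borel sets, whence $P_Q(G)=\mu_G(A)/\int_A\di s=\lambda_G$ independently of $A$, which is the claim.

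The main obstacle I anticipate is bookkeeping rather than conceptual: one must track the action of $\theta_s$ on the integrating measure $\nu(\di t)$ correctly so that the two shifts cancel in $\theta_{-t}\theta_s\nu=\theta_{-(t-s)}\nu$. A secondary technical point is the case $\lambda_Q=\infty$, where $\lambda_G$ may be infinite for some $G$; then the interval/uniqueness step should be run on the $\sigma$-finite sets, noting that $P_Q(G)\le P_Q(\mathcal{M})=\lambda_Q$ and that for $G$ with $\mu_G(0,1]<\infty$ the argument above applies verbatim, the general case following by the usual approximation from below.
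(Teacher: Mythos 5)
Your argument is correct and follows essentially the same route as the paper's: both reduce the claim to showing that $A\mapsto\int\int_A\mathds{1}\{\theta_{-t}\nu\in G\}\,\nu(\di t)\,Q(\di\nu)$ is a translation-invariant measure (via the same change of variables intertwining the shift on $\nu$ with a shift of $A$, plus stationarity of $Q$) and then identify it as a multiple of Lebesgue measure. The only differences are presentational — the paper packages the substitution as the identity $\theta_{-t}\nu_g=(\theta_{-t}\nu)_g$, while you work directly with the set function and additionally spell out the uniqueness step and the $\sigma$-finiteness caveat, which the paper leaves implicit.
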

\begin{proof} For probability distributions this is shown in \cite[Lemma 11.2]{kallenberg2006foundations}. The proof transfers easily to quasi-distributions. Let $Q$ be stationary and non-zero. Fix $G\in\mathfrak{B}(\mathcal{M})$, let $g(\nu)=\mathds{1}\{\nu\in G\}$ and consider the measures
	\[
	\nu_g(B)=\int_B g(\theta_{-s}\nu)\nu(\di s),\; B\in\mathfrak{B}(\mathds{R}),
	\]
then the definition of the Palm measure reads

\begin{equation}\label{eq:PMFrac}
P_Q(G)=\frac{\int \nu_g(A) Q(\di \nu)}{\int_A \di s}.
\end{equation}
We claim that $\theta_{-t}\nu_g = (\theta_{-t}\nu)_g$, then we have for any measurable function $h$,
\[
\int h\left(\theta_{-t}\nu_g\right)\di Q = \int h\left((\theta_{-t}\nu)_g\right)\di Q =\int h(\nu_g)\di Q
\]	
by stationarity of $Q$. This means that stationarity of $Q$ is preserved under the operation $(\cdot)_g$, hence the numerator in \eqref{eq:PMFrac} does not change under translations of $A$, i.e. is a multiple of Lebesgue measure. To prove the claim we note that, for any Borel set $B$,
\begin{equation*}
\begin{aligned}
\theta_{-t}\nu_g(B)&=\nu_g(B+t)=\int_{B+t}g(\theta_{-s}\nu)\nu(\di s)\\
&=\int \mathds{1}\{s-t\in B \}g(\theta_{-s}\nu)\nu(\di s)=\int \mathds{1}\{u\in B \}g(\theta_{-u-t}\nu)\nu(\di u + t)\\
&=\int_{B}g(\theta_{-u}\, \theta_{-t}\nu) \theta_{-t}\nu(\di u) = (\theta_{-t}\nu)_g(B).
\end{aligned}
\end{equation*}
\end{proof}
\bibliography{SSSI} 
\bibliographystyle{alpha}

\end{document}